\documentclass[review]{elsarticle}
\usepackage{stmaryrd}
\usepackage{bbm}
\usepackage[dvipsnames]{xcolor}
\usepackage{amsmath}
\usepackage{amsthm}
\usepackage{amssymb}
\usepackage{mathtools}
\usepackage{subcaption}
\usepackage{etoolbox}
\usepackage{environ}
\usepackage{ulem}
\usepackage{multicol}
\usepackage{listings}
\usepackage{graphicx}
\usepackage{multirow}
\usepackage{subcaption}
\usepackage{blkarray}
\usepackage{algorithm}
\usepackage{algpseudocode}

\usepackage{lineno,hyperref}
\modulolinenumbers[5]

\newcommand{\bc}{\operatorname{bc}}

\newcommand{\bp}{\operatorname{bp}}
\newcommand{\mc}{\operatorname{mc}}

\newcommand{\lab}{\operatorname{label}}

\newcommand{\MID}{\operatorname{mid}}

\newtheorem*{rep@theorem}{\rep@title}
\newcommand{\newreptheorem}[2]{%
\newenvironment{rep#1}[1]{%
 \def\rep@title{#2 \ref{##1}}%
 \begin{rep@theorem}}%
 {\end{rep@theorem}}}
\makeatother

\newtheorem{corollary}{Corollary}
\newtheorem{theorem}{Theorem}
\newreptheorem{theorem}{Theorem}
\newtheorem{lemma}{Lemma}
\newreptheorem{lemma}{Lemma}
\newtheorem{remark}{Remark}
\newtheorem{definition}{Definition}

\newtheorem{claim}{Claim}
\newreptheorem{claim}{Claim}
\newtheorem{proposition}{Proposition}
\newreptheorem{proposition}{Proposition}

\usepackage{tikz,tkz-graph}
% \usetikzlibrary{arrows,patterns}
% \usetikzlibrary{positioning}
\tikzstyle{vertex}=[circle, draw, inner sep=0pt, minimum size=1.5em]
\tikzstyle{svertex}=[draw, inner sep=0pt, minimum size=1.5em]

%\usetikzlibrary{decorations.pathreplacing}
%\usetikzlibrary{snakes}
\usetikzlibrary{decorations}
\usetikzlibrary{arrows.meta,patterns}

\journal{ }

%%%%%%%%%%%%%%%%%%%%%%%
%% Elsevier bibliography styles
%%%%%%%%%%%%%%%%%%%%%%%
%% To change the style, put a % in front of the second line of the current style and
%% remove the % from the second line of the style you would like to use.
%%%%%%%%%%%%%%%%%%%%%%%

%% Numbered
% \bibliographystyle{model1-num-names}\biboptions{authoryear}

%% Numbered without titles
%\bibliographystyle{model1a-num-names}

%% Harvard
% \bibliographystyle{model2-names.bst}\biboptions{authoryear}

%% Vancouver numbered
%\usepackage{numcompress}\bibliographystyle{model3-num-names}

%% Vancouver name/year
%\usepackage{numcompress}\bibliographystyle{model4-names}\biboptions{authoryear}

%% APA style
% \bibliographystyle{model5-names}\biboptions{authoryear}

%% AMA style
% \usepackage{numcompress}\bibliographystyle{model6-num-names}

%% `Elsevier LaTeX' style
\bibliographystyle{elsarticle-num-sort}
%%%%%%%%%%%%%%%%%%%%%%%

\begin{document}

\begin{frontmatter}

\title{Finding Biclique Partitions of Co-Chordal Graphs}

%% Group authors per affiliation:
\author[1]{Bochuan Lyu\corref{cor1}}
\ead{bl46@rice.edu}
\author[1]{Illya V. Hicks}
\ead{ivhicks@rice.edu}
\cortext[cor1]{Corresponding author}
\address[1]{Rice University, Department of Computational Applied Mathematics and Operations Research, United States of America}

\begin{abstract}
The biclique partition number $(\bp)$ of a graph $G$ is referred to as the least number of complete bipartite (biclique) subgraphs that are required to cover the edges of the graph exactly once. In this paper, we show that the biclique partition number ($\bp$) of a co-chordal (complementary graph of chordal) graph $G = (V, E)$ is less than the number of maximal cliques ($\mc$) of its complementary graph: a chordal graph $G^c = (V, E^c)$. We first provide a general framework of the ``divide and conquer" heuristic of finding minimum biclique partitions of co-chordal graphs based on clique trees. Furthermore, a heuristic of complexity $O[|V|(|V|+|E^c|)]$ is proposed by applying lexicographic breadth-first search to find structures called moplexes. Either heuristic gives us a biclique partition of $G$ with size $\mc(G^c)-1$. In addition, we prove that both of our heuristics can solve the minimum biclique partition problem on $G$ exactly if its complement $G^c$ is chordal and clique vertex irreducible. We also show that $\mc(G^c) - 2 \leq \bp(G) \leq \mc(G^c) - 1$ if $G$ is a split graph.
\end{abstract}

\begin{keyword}
biclique partitions, co-chordal graphs, clique vertex irreducible, split graphs
\end{keyword}

\end{frontmatter}

\nolinenumbers

\section{Introduction}

The biclique partition number $(\bp)$ of a graph $G$ is referred to as the least number of complete bipartite (biclique) subgraphs that are required to cover the edges of the graph exactly once (however, the vertices can belong to two or more bicliques). The set of such biclique subgraphs is called a biclique partition of $G$. Graham and Pollak first introduced this concept in network addressing~\cite{graham1971addressing} and graph storage~\cite{graham1972embedding}. Their famous Graham-Pollak Theorem proves a result about the biclique partition numbers on complete graphs and it draws much attention from algebraic graph theory~\cite{cioabua2013variations, leader2017improved, peck1984new, tverberg1982decomposition, vishwanathan2010counting}. However, no purely combinatorial proof is known to the result~\cite{martin2018proofs}. Rawshdeh and Al-Ezeh~\cite{rawshdehbiclique} extended Graham-Pollak Theorem to find biclique partition numbers on line graphs and their complements of complete graphs and bicliques. The biclique partition also has a strong connection with biclique cover number $(\bc)$, where the edges of a graph are covered by bicliques but not necessarily disjointed. Pinto~\cite{pinto2013biclique} showed that $\bp(G) \leq \frac{1}{2}(3^{\bc(G)} - 1)$.

Moreover, finding a biclique partition with the minimum size is NP-complete even on the graphs without 4-cycles~\cite{kratzke1988eigensharp}. In this work, we focus on studying the biclique partition number on co-chordal (complement of chordal) and its important subclass, split graphs\footnote{Almost all chordal graphs are split graphs. It means that as $n$ goes to infinity, the fraction of $n$-vertex split graphs in $n$-vertex chordal graphs goes to 1~\cite{bender1985almost}. Since the complement of a split graph is also split, a split graph is also co-chordal.}.

%  and graphs with the maximum degree no more than 3~\cite{cioaba2002np}

There are also many related research studies around biclique partitions. Motivated by a technique for clustering data on binary matrices, Bein et al.~\cite{bein2008clustering} considered a biclique vertex partition problem on a bipartite graph where each vertex is covered exactly once in a collection of biclique subgraphs. De Sousa Filho et al.~\cite{de2021biclique} also studied the biclique vertex partition problem and its variant bicluster editing problem, where they developed a polyhedral study on biclique vertex partitions on a complete bipartite graph. Groshaus et al.~\cite{groshaus2022biclique} gave a polynomial-time algorithm to determine whether a graph is a biclique graph (an intersection graph of the bicliques) of a subclass of split graphs. Shigeta and Amano~\cite{shigeta2015ordered} provided an explicit construction of an ordered biclique partition, a variant of biclique partition, of $K_n$ of size $n^{1/2+o(1)}$, which improved the $O(n^{2/3})$ bound shown by Amano~\cite{amano2014some}.

Another related graph characteristic is $\bp_k(G)$ where the edges can be covered by at least one and at most $k$ biclique subgraphs~\cite{alon1997neighborly} and Alon showed that the minimum possible number for $\bp_k(K_n)$ is $\Theta(k n^{1/k})$, where $K_n$ is a complete graph with $n$ vertices. Recent work by Rohatgi et al.~\cite{rohatgi2020regarding} showed that if each edge is exactly covered by $k$ bicliques, the number of bicliques required to cover $K_n$ is $ (1 + o(1))n$.

% \footnote{The heuristic is motivated by the study on finding biclique covers of conflict graphs of combinatorial disjunctive constraints~\cite{lyu2022modeling}.}

In this paper, we study a biclique (edge) partition problem on co-chordal graphs and split graphs using clique trees, moplexes, and lexicographic breadth-first search (LexBFS) defined in Section~\ref{sec:pre}. We introduce a new definition, partitioning biclique, which can naturally naturally partition a graph into two induced subgraphs with no shared edges in Section~\ref{sec:PB}. In Section~\ref{sec:h_ct}, we provide a heuristic to find a biclique partition on a co-chordal graph given a clique tree of the complement of the co-chordal graph. We also prove the correctness of the heuristic and show the size of the biclique partition is exactly equal to the number of maximal cliques in the complementary graph of the co-chordal graph minus one. We also provide a corollary that states that the biclique partition number of a co-chordal graph is less than the number of maximal cliques of its complement. In Section~\ref{sec:h_peo}, we provide an efficient heuristic to obtain biclique partitions of co-chordal graphs by finding moplexes~\cite{berry1998separability}, defined later, using LexBFS. We also show that two heuristics provide biclique partitions of the same size.  In Section~\ref{sec:lower}, we prove that both heuristics can find a minimum biclique partition of $G$ if its complement $G^c$ is chordal and clique vertex irreducible. We also derive a lower bound of the biclique partition number of split graphs and show that our heuristics can obtain a biclique partition on any split graph with a size no more than the biclique partition number plus one. In Section~\ref{sec:fr}, we summarize the contribution of our work and point out some future directions.

% \paragraph{Our contribution}

% \begin{enumerate}
%     \item We provide two heuristics of finding biclique partition on co-chordal graphs $G = (V, E)$ and we show the computational time of the second heuristic is $O[|V|(|V|+|E^c|)]$.
%     \item We show that the sizes of the biclique partition returned by both heuristics are equal to $\mc(G^c) - 1$, where $\mc(G^c)$ is the number of maximal cliques of the complement of the input graph. 
%     \item We also apply Graham-Pollak Theorem to indicate that the biclique partition number of a split graph $G$, $\bp(G) \geq \mc(G^c) - 2$, which implies that the size of biclique partition found by our heuristics is no larger than $\bp(G)+1$ when $G$ is a split graph.
% \end{enumerate}

\section{Preliminaries} \label{sec:pre}

A \textit{simple graph} is a pair $G = (V, E)$ where $V$ is a finite set of vertices and $E \subseteq \{uv: u, v\in V, u \neq v\}$. We use $V(G)$ and $E(G)$ to represent the vertex set and edge set of the graph $G$. Two vertices are \textit{adjacent} in $G$ if there is an edge between them. A \textit{subgraph} $G' = (V', E')$ of $G$ is a graph where $V' \subseteq V$ and $E' \subseteq \{uv \in E: u, v \in V'\}$. Given $A \subseteq V$, the \textit{subgraph} of $G$ \textit{induced} by $A$ is denoted as $G(A) = (A, E_A)$, where $E_A = \{uv \in E: u, v \in A\}$. A \textit{clique} is a subset of vertices of an graph $G$ such that every two distinct vertices are adjacent in $G$. The \textit{neighborhood} of a vertex $v$ of a graph $G$ is the set of all vertices, other than $v$, that are adjacent with $v$ and is denoted as $N_G(v)$. The \textit{closed neighborhood} of $v$ is denoted as $N_G[v] = N_G(v) \cup \{v\}$. For simplicity, we also define the neighborhood of a vertex set $A$ of a graph $G$ as $N_G(A) = \{u \in N_G(v): \forall v \in A\} \setminus A$ and closed neighborhood of $A$ as $N_G[A] = N_G(A) \cup A$. A \textit{maximal clique} of $G$ is a clique of $G$ such that it is not a proper subset of any clique of $G$. Note that a vertex set with only one vertex $K_1$ is also a clique. We denote the number of maximal cliques of $G$ as $\mc(G)$ and we use $\mathcal{K}_G$ or $\mathcal{K}$ to denote the set of all maximal cliques of $G$. A maximum clique of $G$ is a clique of $G$ with the maximum number of vertices and we denote that number as the clique number of $G$, $\omega(G)$. An \textit{independent set} of a graph $G$ is a set of vertices that are pairwise nonadjacent with each other in $G$. Similarly, a \textit{maximal independent set} is an independent set that is not a proper subset of any independent set and a \textit{maximum independent set} is an independent set with the maximum number of vertices. Note that an independent set can be empty or only have one vertex.

A graph $C_n = (V, E)$ is a \textit{cycle} if the vertices and edges: $V = \{v_1, v_2, \hdots, v_n\}$ and $E = \{v_1v_2, v_2v_3, \hdots, v_{n-1}v_n, v_nv_1\}$. A graph is a \textit{tree} if it is connected and does not have any subgraph that is a cycle.

Given two vertex sets $U$ and $V$, we denote $U \times V$ to be the edge set $\{uv: u \in U, v \in V\}$. A \textit{bipartite} graph $G = (L \cup R, E)$ is a graph where $L$ and $R$ are disjointed vertex sets with the edge set $E \subseteq L \times R$. A \textit{biclique} graph is a complete bipartite graph $G = (L \cup R, E)$ where $E = L \times R$ and we denote it as $\{L, R\}$ for short. A \textit{biclique partition} of a graph $G$ is a collection of biclique subgraphs of $G$ such that every edge of $G$ is in exactly one biclique of the collection. The \textit{minimum biclique partition problem} on $G$ is to find a biclique partition with the minimum number of bicliques in the collection and we denote that value to be $\bp(G)$.

We denote that $\llbracket n \rrbracket = \{1, 2, \hdots, n\}$ where $n$ is a positive integer. A vertex is \textit{simplicial} if its neighborhood is a clique. An ordering $v_1, v_2, \hdots, v_n$ of $V$ is a \textit{perfect elimination ordering} if for all $i \in \llbracket n \rrbracket$, $v_i$ is simplicial on the induced subgraph $G(\{v_j: j \in \{i, i+1, \hdots, n\}\})$. An ordering function $\sigma: \llbracket n \rrbracket \rightarrow V$ is defined to describe the ordering of vertices $V$. A \textit{clique tree} $\mathcal{T}_{\mathcal{K}}$ for a chordal graph $G$ is a tree where each vertex represents a maximal clique of $G$ and satisfies the clique-intersection property: given any two distinct maximal cliques $K_1$ and $K_2$ in the tree, every clique on the path between $K_1$ and $K_2$ in $\mathcal{T}_{\mathcal{K}}$ contains $K_1 \cap K_2$. We also define the \textit{middle set} of edge $e \in \mathcal{T}_{\mathcal{K}}$, $\MID(e)$, to be the intersection of the vertices of cliques on its two ends.

A graph $G$ is \textit{clique vertex irreducible} if every maximal clique in $G$ has a vertex which does not lie in any other maximal clique of $G$~\cite{lakshmanan2009clique}.

% A clique tree for a graph $G$ is a tree where its vertices are a set of cliques of $G$ that contains all maximal cliques of $G$.

% A graph is \textit{chordal} if there is no induced cycle subgraph of length greater than 3. A graph is chordal if and only if it has a clique tree~\cite{blair1993introduction}. In our work, we study biclique partitions on \textit{co-chordal} graphs, so most of $G^c$'s in the following sections are chordal graphs. We use $\mathcal{K}^c$ and $\mathcal{T}_{\mathcal{K}^c}$ to denote the set of all maximal cliques and a clique tree of $G^c$.

% Also, a graph is chordal if and only if it has a perfect elimination ordering~\cite{fulkerson1965incidence}. The perfect elimination ordering of a chordal graph can be obtained by \textit{lexicographic breadth-first search} (LexBFS) described in Algorithm~\ref{alg:lexbfs}~\cite{rose1976algorithmic, corneil2004lexicographic}, where lexicographical order is defined in Definition~\ref{def:lex_order}. Note that the LexBFS algorithm in Algorithm~\ref{alg:lexbfs} can be implemented in linear-time: $O(|V| + |E|)$ with partition refinement~\cite{habib2000lex}.

A \textit{split} graph is a graph whose vertices can be partitioned into a clique and an independent set. A graph is \textit{chordal} if there is no induced cycle subgraph of length greater than 3. Note that a split graph is also chordal. A graph is chordal if and only if it has a clique tree~\cite{blair1993introduction}. Also, a graph is chordal if and only if it has a perfect elimination ordering~\cite{fulkerson1965incidence}. A perfect elimination ordering of a chordal graph can be obtained by \textit{lexicographic breadth-first search} (LexBFS) described in Algorithm~\ref{alg:lexbfs}~\cite{corneil2004lexicographic, rose1976algorithmic}, where lexicographical order is defined in Definition~\ref{def:lex_order}. Note that the LexBFS algorithm in Algorithm~\ref{alg:lexbfs} can be implemented in linear-time: $O(|V| + |E|)$ with partition refinement~\cite{habib2000lex}.

\begin{definition} \label{def:lex_order} [Lexicographical Order]
Let $X$ be a set of all vectors of real numbers with a finite length. Then, we can define a lexicographical order on $X$ where
\begin{enumerate}
    \item $\emptyset \in X$ and for any $x \in X$, $\emptyset \preccurlyeq x$.
    \item $(x_1, x_2) \preccurlyeq (y_1, y_2)$ if and only if one of the following holds:
    \begin{enumerate}
        \item $x_1 < y_1$
        \item $x_1 = y_1$ and $x_2 \preccurlyeq y_2$
    \end{enumerate}
    \noindent where $x_1, y_1 \in \mathbb{R}$ and $x_2, y_2 \in X$. 
\end{enumerate}
\end{definition}

\begin{algorithm}[H]
\begin{algorithmic}[1]
\State \textbf{Input}: A graph $G = (V, E)$ and an arbitrary selected vertex $v$ in $V$.
\State \textbf{Output}: An ordering function $\sigma: \llbracket n \rrbracket \rightarrow V$ of the vertices $V$.
\State $\lab(v) \leftarrow (n)$ where $n = |V|$, and $\lab(u) \leftarrow ()$ for all $u \in V \setminus \{v\}$.
\For{$i \in \{n, n-1, \hdots, 1\}$}
\State Select an unnumbered vertex $u$ with lexicographically the largest label.
\State $\sigma(i) \leftarrow u$.
\For{each unnumbered vertex $w$ in $N_G(u)$}
\State $\lab(w) \leftarrow (\lab(w), i)$. \Comment{$(\cdot, \cdot)$ here is a concatenation operation.}
\EndFor
\EndFor
\State \textbf{return} $\sigma$
\end{algorithmic}
\caption{Generic Lexicographic Breadth-First Search~\cite{corneil2004lexicographic, rose1976algorithmic}.} \label{alg:lexbfs}
\end{algorithm}

In our work, we study biclique partitions on \textit{co-chordal} graphs, so most of the $G^c$'s in the following sections are chordal graphs. We use $\mathcal{K}^c$ and $\mathcal{T}_{\mathcal{K}^c}$ to denote the set of all maximal cliques and a clique tree of $G^c$.

A \textit{module} of a graph $G = (V, E)$ is a vertex set $A \subseteq V$ such that all the vertices in $A$ share the same neighborhood in $V \setminus A$. A \textit{separator} of $G = (V, E)$ is a set of vertices, say $S$, such that $G(V \setminus S)$ is disconnected. A separator $S$ is minimal if no proper set of $S$ is a separator. A \textit{moplex} $X$ of $G$ is both a clique and a module such that $N_G(X)$ is a minimal separator (see Figure~\ref{fig:moplex}). Berry and Bordat~\cite{berry1998separability} discovered that LexBFS can be applied to find a moplex of a general graph $G$ in linear time, which motivates us to propose the biclique partition algorithm in Section~\ref{sec:h_peo}.

\begin{proposition} [Theorem 5.1~\cite{berry1998separability}] \label{prop:sigma_1_moplex}
Given a graph $G$ and $\sigma$ generated by Algorithm~\ref{alg:lexbfs}, $\sigma(1)$ belongs to a moplex.
\end{proposition}

\begin{figure}[H]
    \centering
\tikzset{every picture/.style={line width=0.75pt}} %set default line width to 0.75pt        

\begin{tikzpicture}[x=0.75pt,y=0.75pt,yscale=-1,xscale=1]
%uncomment if require: \path (0,363); %set diagram left start at 0, and has height of 363
%Shape: Ellipse [id:dp5548681566476421] 
\draw  [dash pattern={on 4.5pt off 4.5pt}] (73.55,108.51) .. controls (73.65,98.43) and (98.27,90.52) .. (128.52,90.84) .. controls (158.78,91.15) and (183.23,99.58) .. (183.12,109.66) .. controls (183.01,119.73) and (158.4,127.65) .. (128.14,127.33) .. controls (97.88,127.01) and (73.44,118.59) .. (73.55,108.51) -- cycle ;
%Shape: Circle [id:dp9756278243801157] 
\draw  [fill={rgb, 255:red, 0; green, 0; blue, 0 }  ,fill opacity=1 ] (93.33,109.08) .. controls (93.33,106.32) and (95.57,104.08) .. (98.33,104.08) .. controls (101.09,104.08) and (103.33,106.32) .. (103.33,109.08) .. controls (103.33,111.84) and (101.09,114.08) .. (98.33,114.08) .. controls (95.57,114.08) and (93.33,111.84) .. (93.33,109.08) -- cycle ;
%Shape: Circle [id:dp10819449548384719] 
\draw  [fill={rgb, 255:red, 0; green, 0; blue, 0 }  ,fill opacity=1 ] (153.33,109.08) .. controls (153.33,106.32) and (155.57,104.08) .. (158.33,104.08) .. controls (161.09,104.08) and (163.33,106.32) .. (163.33,109.08) .. controls (163.33,111.84) and (161.09,114.08) .. (158.33,114.08) .. controls (155.57,114.08) and (153.33,111.84) .. (153.33,109.08) -- cycle ;
%Shape: Circle [id:dp882008094737325] 
\draw  [fill={rgb, 255:red, 0; green, 0; blue, 0 }  ,fill opacity=1 ] (93.33,169.08) .. controls (93.33,166.32) and (95.57,164.08) .. (98.33,164.08) .. controls (101.09,164.08) and (103.33,166.32) .. (103.33,169.08) .. controls (103.33,171.84) and (101.09,174.08) .. (98.33,174.08) .. controls (95.57,174.08) and (93.33,171.84) .. (93.33,169.08) -- cycle ;
%Shape: Circle [id:dp9057077611092497] 
\draw  [fill={rgb, 255:red, 0; green, 0; blue, 0 }  ,fill opacity=1 ] (29,184.42) .. controls (29,181.66) and (31.24,179.42) .. (34,179.42) .. controls (36.76,179.42) and (39,181.66) .. (39,184.42) .. controls (39,187.18) and (36.76,189.42) .. (34,189.42) .. controls (31.24,189.42) and (29,187.18) .. (29,184.42) -- cycle ;
%Shape: Circle [id:dp5148532107206845] 
\draw  [fill={rgb, 255:red, 0; green, 0; blue, 0 }  ,fill opacity=1 ] (72.33,229.08) .. controls (72.33,226.32) and (74.57,224.08) .. (77.33,224.08) .. controls (80.09,224.08) and (82.33,226.32) .. (82.33,229.08) .. controls (82.33,231.84) and (80.09,234.08) .. (77.33,234.08) .. controls (74.57,234.08) and (72.33,231.84) .. (72.33,229.08) -- cycle ;
%Shape: Circle [id:dp9807984111222385] 
\draw  [fill={rgb, 255:red, 0; green, 0; blue, 0 }  ,fill opacity=1 ] (213.33,229.08) .. controls (213.33,226.32) and (215.57,224.08) .. (218.33,224.08) .. controls (221.09,224.08) and (223.33,226.32) .. (223.33,229.08) .. controls (223.33,231.84) and (221.09,234.08) .. (218.33,234.08) .. controls (215.57,234.08) and (213.33,231.84) .. (213.33,229.08) -- cycle ;
%Shape: Circle [id:dp7272536761942445] 
\draw  [fill={rgb, 255:red, 0; green, 0; blue, 0 }  ,fill opacity=1 ] (213.33,169.08) .. controls (213.33,166.32) and (215.57,164.08) .. (218.33,164.08) .. controls (221.09,164.08) and (223.33,166.32) .. (223.33,169.08) .. controls (223.33,171.84) and (221.09,174.08) .. (218.33,174.08) .. controls (215.57,174.08) and (213.33,171.84) .. (213.33,169.08) -- cycle ;
%Shape: Circle [id:dp24517304158949127] 
\draw  [fill={rgb, 255:red, 0; green, 0; blue, 0 }  ,fill opacity=1 ] (153.33,229.08) .. controls (153.33,226.32) and (155.57,224.08) .. (158.33,224.08) .. controls (161.09,224.08) and (163.33,226.32) .. (163.33,229.08) .. controls (163.33,231.84) and (161.09,234.08) .. (158.33,234.08) .. controls (155.57,234.08) and (153.33,231.84) .. (153.33,229.08) -- cycle ;
%Shape: Circle [id:dp5868912583482289] 
\draw  [fill={rgb, 255:red, 0; green, 0; blue, 0 }  ,fill opacity=1 ] (153.33,169.08) .. controls (153.33,166.32) and (155.57,164.08) .. (158.33,164.08) .. controls (161.09,164.08) and (163.33,166.32) .. (163.33,169.08) .. controls (163.33,171.84) and (161.09,174.08) .. (158.33,174.08) .. controls (155.57,174.08) and (153.33,171.84) .. (153.33,169.08) -- cycle ;
%Straight Lines [id:da9482112892450907] 
\draw    (98.33,109.08) -- (158.33,169.08) ;
%Straight Lines [id:da8737831530841589] 
\draw    (98.33,169.08) -- (158.33,169.08) ;
%Straight Lines [id:da6430381386564166] 
\draw    (98.33,169.08) -- (77.33,229.08) ;
%Straight Lines [id:da40298074143138485] 
\draw    (34,184.42) -- (98.33,169.08) ;
%Straight Lines [id:da8406310765866307] 
\draw    (34,184.42) -- (77.33,229.08) ;
%Straight Lines [id:da9038212016104119] 
\draw    (98.33,109.08) -- (98.33,169.08) ;
%Straight Lines [id:da7441468148703343] 
\draw    (98.33,109.08) -- (158.33,109.08) ;
%Straight Lines [id:da7790058295071323] 
\draw    (158.33,109.08) -- (158.33,169.08) ;
%Straight Lines [id:da11513119302748231] 
\draw    (158.33,109.08) -- (98.33,169.08) ;
%Straight Lines [id:da7609632008956944] 
\draw    (218.33,169.08) -- (158.33,169.08) ;
%Straight Lines [id:da1480612186473027] 
\draw    (158.33,169.08) -- (158.33,229.08) ;
%Straight Lines [id:da31600344112657175] 
\draw    (218.33,169.08) -- (218.33,229.08) ;
%Straight Lines [id:da014078515005687908] 
\draw    (158.33,229.08) -- (218.33,229.08) ;
%Straight Lines [id:da9083134782162632] 
\draw    (158.33,169.08) -- (218.33,229.08) ;
%Straight Lines [id:da47489736878509525] 
\draw    (218.33,169.08) -- (158.33,229.08) ;
%Shape: Ellipse [id:dp09184337256370712] 
\draw  [dash pattern={on 4.5pt off 4.5pt}] (217.93,144.3) .. controls (228.01,144.22) and (236.36,168.69) .. (236.58,198.95) .. controls (236.8,229.21) and (228.81,253.8) .. (218.73,253.87) .. controls (208.66,253.94) and (200.31,229.48) .. (200.09,199.22) .. controls (199.86,168.96) and (207.85,144.37) .. (217.93,144.3) -- cycle ;
%Straight Lines [id:da1172335912410245] 
\draw    (98.33,169.08) -- (158.33,229.08) ;
% Text Node
\draw (137.37,73.5) node   [align=left] {\begin{minipage}[lt]{21.35pt}\setlength\topsep{0pt}
$\displaystyle X_{1}$
\end{minipage}};
% Text Node
\draw (258.7,192.83) node   [align=left] {\begin{minipage}[lt]{21.35pt}\setlength\topsep{0pt}
$\displaystyle X_{2}$
\end{minipage}};
\end{tikzpicture}
    \caption{$X_1$ is a module but not a moplex. $X_2$ is a moplex.}
    \label{fig:moplex}
\end{figure}
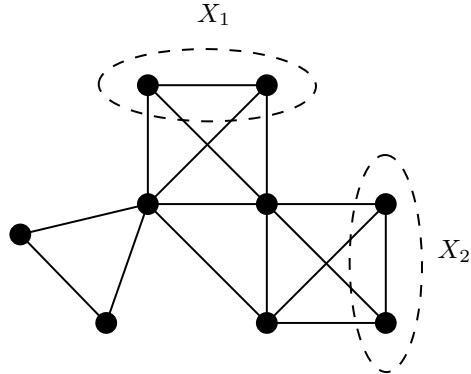

% which is demonstrated in Algorithm~\ref{alg:lexbfs_refine} in~\ref{sec:ap_lexbfs}.

% \begin{theorem}[Theorem 2~\cite{corneil2004lexicographic}, \cite{rose1976algorithmic}]
% Let $\sigma$ be an LexBFS ordering of a chordal graph $G$ and let $v$ be an arbitrary vertex of $G$. Let $W$ denote the set of vertices $w$ that occurs before $v$ in $\sigma$. Then $v$ is simplicial in the subgraph of $G$ induced by $W \cup \{v\}$.
% \end{theorem}

\section{Partitioning Biclique} \label{sec:PB}

We will introduce a new definition: partitioning biclique, which can naturally partition the edges of a graph into two edge disjoint subgraphs. We first start with a definition of partitioning biclique.

\begin{definition}[partitioning biclique]
Given a graph $G = (V, E)$, a biclique subgraph of $G$, $\{L, R\}$, is a partitioning biclique subgraph of $G$ if the edge sets of $\{L, R\}$, $G(V \setminus L)$, and $G(V \setminus R)$ partition the edges of $G$, i.e. each edge of $G$ is exactly in one of $\{L, R\}$, $G(V \setminus L)$, and $G(V \setminus R)$.
\end{definition}

It is worth to noticing that $\{L, R\}$ is not necessarily equal to the induced subgraph $G(L \cup R)$. It means that $L$ and $R$ might not be independent sets of $G$. 

We then show that an arbitrary biclique subgraph can divide the edges of the original graphs into three parts: the biclique and two induced subgraphs. See Figure~\ref{fig:partition_biclique} for a visualization of this idea: the edges of graph $G$ are partitioned into the edges of $\{L, R\}$, $G(V \setminus L)$, and $G(V \setminus R)$.

\begin{lemma} \label{lm:biclique_sep}
Given a graph $G = (V, E)$, let $\{L, R\}$ be an arbitrary biclique subgraph of $G$. Then, $E = E(\{L, R\}) \cup E(G(V \setminus L)) \cup E(G(V \setminus R))$.
\end{lemma}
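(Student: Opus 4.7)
The plan is to prove the two set inclusions separately. The inclusion $E(\{L,R\}) \cup E(G(V\setminus L)) \cup E(G(V\setminus R)) \subseteq E$ is immediate because each of the three graphs is, by definition, a subgraph of $G$; so the real content is the reverse inclusion.

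For the forward inclusion, I would take an arbitrary edge $uv \in E$ and do a case analysis based on how the endpoints $u,v$ intersect the biclique sides $L$ and $R$. The key structural fact I would invoke is that in the biclique $\{L,R\}$ the sets $L$ and $R$ are disjoint (this follows from the paper's definition of a biclique as a complete bipartite graph). I would split into the cases: (i) one endpoint in $L$ and one in $R$; (ii) at least one endpoint in $V\setminus L$; (iii) at least one endpoint in $V\setminus R$. In case (i), since $\{L,R\}$ is a biclique subgraph of $G$ we have $L\times R \subseteq E(\{L,R\})$, so $uv \in E(\{L,R\})$. In the remaining cases I would argue that at least one of the containments $\{u,v\}\cap L = \emptyset$ or $\{u,v\}\cap R = \emptyset$ must hold; indeed, if both $\{u,v\}\cap L$ and $\{u,v\}\cap R$ were nonempty then, because $L\cap R = \emptyset$, one endpoint would lie in $L$ and the other in $R$, reducing to case (i). Hence either $u,v \in V\setminus L$, giving $uv \in E(G(V\setminus L))$, or $u,v \in V\setminus R$, giving $uv \in E(G(V\setminus R))$.

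There is essentially no hard step here; the only thing to be careful about is the definitional one, namely to explicitly use that $L$ and $R$ are disjoint (otherwise the case analysis does not cleanly divide the edge set). The lemma does not claim the union is disjoint—that stronger statement is precisely what distinguishes an ordinary biclique subgraph from a \emph{partitioned} biclique—so I would not attempt to prove disjointness here.
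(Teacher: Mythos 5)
Your proposal is correct and follows essentially the same route as the paper: the reverse inclusion is immediate since the three graphs are subgraphs of $G$, and the forward inclusion is a case analysis on where the endpoints of an edge lie relative to $L$ and $R$. If anything, you are slightly more careful than the paper in justifying that the cases are exhaustive (via the disjointness of $L$ and $R$), which is a harmless improvement.
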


\begin{proof}
Since $\{L, R\}$, $G(V \setminus L)$, $G(V \setminus R)$ are all subgraphs of $G$, then $E(\{L, R\}) \cup E(G(V \setminus L)) \cup E(G(V \setminus R)) \subseteq E$.

Let $C = V \setminus (L \cup R)$ and $uv$ be an arbitrary edge in $G$. Thus, $V \setminus L = R \cup C$ and $V \setminus R = L \cup C$. If $u, v \in (L \cup C)$, then $uv \in E(G(V \setminus R))$. If neither of $u$ and $v$ is in $(L \cup C)$, then $u, v \in R$ and $uv \in E(G(V \setminus L))$. If one of $u$ and $v$ is in $L \cup C$ and the other one is not, then we can assume that $u \in (L \cup C)$ and $v \in R$ without loss of generality. Then, either $uv \in E(G(V \setminus L))$ ($u \in C$) or $uv \in \{L, R\}$ ($u \in L$). Hence, $E(\{L, R\}) \cup E(G(V \setminus L)) \cup E(G(V \setminus R)) = E$.
\end{proof}

Next, we show that a biclique subgraph is a partitioning biclique of a graph $G$ if and only if the vertices in $G$ that are not in the biclique form an independent set in $G$.

\begin{figure}[H]
    \centering
\tikzset{every picture/.style={line width=0.75pt}} %set default line width to 0.75pt        

\begin{tikzpicture}[x=0.75pt,y=0.75pt,yscale=-1,xscale=1]
%uncomment if require: \path (0,363); %set diagram left start at 0, and has height of 363

%Shape: Circle [id:dp011094683990108267] 
\draw  [fill={rgb, 255:red, 0; green, 0; blue, 0 }  ,fill opacity=1 ] (69,91) .. controls (69,88.24) and (71.24,86) .. (74,86) .. controls (76.76,86) and (79,88.24) .. (79,91) .. controls (79,93.76) and (76.76,96) .. (74,96) .. controls (71.24,96) and (69,93.76) .. (69,91) -- cycle ;
%Shape: Circle [id:dp20443083372922088] 
\draw  [fill={rgb, 255:red, 0; green, 0; blue, 0 }  ,fill opacity=1 ] (69,131) .. controls (69,128.24) and (71.24,126) .. (74,126) .. controls (76.76,126) and (79,128.24) .. (79,131) .. controls (79,133.76) and (76.76,136) .. (74,136) .. controls (71.24,136) and (69,133.76) .. (69,131) -- cycle ;
%Shape: Circle [id:dp3534135133883767] 
\draw  [fill={rgb, 255:red, 0; green, 0; blue, 0 }  ,fill opacity=1 ] (69,171) .. controls (69,168.24) and (71.24,166) .. (74,166) .. controls (76.76,166) and (79,168.24) .. (79,171) .. controls (79,173.76) and (76.76,176) .. (74,176) .. controls (71.24,176) and (69,173.76) .. (69,171) -- cycle ;
%Shape: Circle [id:dp9277190555210226] 
\draw  [fill={rgb, 255:red, 0; green, 0; blue, 0 }  ,fill opacity=1 ] (69,211) .. controls (69,208.24) and (71.24,206) .. (74,206) .. controls (76.76,206) and (79,208.24) .. (79,211) .. controls (79,213.76) and (76.76,216) .. (74,216) .. controls (71.24,216) and (69,213.76) .. (69,211) -- cycle ;
%Shape: Circle [id:dp45667466417398717] 
\draw  [fill={rgb, 255:red, 0; green, 0; blue, 0 }  ,fill opacity=1 ] (209,111) .. controls (209,108.24) and (211.24,106) .. (214,106) .. controls (216.76,106) and (219,108.24) .. (219,111) .. controls (219,113.76) and (216.76,116) .. (214,116) .. controls (211.24,116) and (209,113.76) .. (209,111) -- cycle ;
%Shape: Circle [id:dp9995260715784553] 
\draw  [fill={rgb, 255:red, 0; green, 0; blue, 0 }  ,fill opacity=1 ] (209,191) .. controls (209,188.24) and (211.24,186) .. (214,186) .. controls (216.76,186) and (219,188.24) .. (219,191) .. controls (219,193.76) and (216.76,196) .. (214,196) .. controls (211.24,196) and (209,193.76) .. (209,191) -- cycle ;
%Straight Lines [id:da19105276273468141] 
\draw    (79,91) -- (209,111) ;
%Shape: Ellipse [id:dp5924333746696995] 
\draw  [fill={rgb, 255:red, 0; green, 0; blue, 0 }  ,fill opacity=1 ] (186.37,266) .. controls (189.05,265.92) and (191.29,268.09) .. (191.38,270.85) .. controls (191.46,273.61) and (189.35,275.91) .. (186.67,276) .. controls (183.99,276.08) and (181.75,273.91) .. (181.67,271.15) .. controls (181.58,268.39) and (183.69,266.09) .. (186.37,266) -- cycle ;
%Shape: Ellipse [id:dp4344541767956387] 
\draw  [fill={rgb, 255:red, 0; green, 0; blue, 0 }  ,fill opacity=1 ] (147.54,267.22) .. controls (150.22,267.14) and (152.46,269.3) .. (152.54,272.06) .. controls (152.63,274.82) and (150.52,277.13) .. (147.84,277.21) .. controls (145.16,277.3) and (142.92,275.13) .. (142.83,272.37) .. controls (142.75,269.61) and (144.86,267.3) .. (147.54,267.22) -- cycle ;
%Shape: Ellipse [id:dp5197390902835048] 
\draw  [fill={rgb, 255:red, 0; green, 0; blue, 0 }  ,fill opacity=1 ] (108.71,268.44) .. controls (111.39,268.35) and (113.63,270.52) .. (113.71,273.28) .. controls (113.79,276.04) and (111.69,278.35) .. (109.01,278.43) .. controls (106.32,278.52) and (104.08,276.35) .. (104,273.59) .. controls (103.92,270.83) and (106.03,268.52) .. (108.71,268.44) -- cycle ;
%Straight Lines [id:da03650735478188638] 
\draw    (79,131) -- (209,111) ;
%Straight Lines [id:da08807678280823072] 
\draw    (79,171) -- (209,111) ;
%Straight Lines [id:da9211934749180481] 
\draw    (79,211) -- (209,111) ;
%Straight Lines [id:da9574567118051671] 
\draw    (79,91) -- (209,191) ;
%Straight Lines [id:da04279638278976372] 
\draw    (79,131) -- (209,191) ;
%Straight Lines [id:da5325352511046271] 
\draw    (79,171) -- (209,191) ;
%Straight Lines [id:da5500789948004414] 
\draw    (79,211) -- (209,191) ;
%Curve Lines [id:da8822708005786619] 
\draw    (69,91) .. controls (40.4,86.9) and (38.4,181.9) .. (69,171) ;
%Curve Lines [id:da26013841460710685] 
\draw    (69,131) .. controls (40.4,126.9) and (38.4,221.9) .. (69,211) ;
%Straight Lines [id:da11631801199995317] 
\draw    (209,191) -- (147.54,267.22) ;
%Straight Lines [id:da28910075214412245] 
\draw    (79,131) -- (147.54,267.22) ;
%Straight Lines [id:da11528149725695003] 
\draw    (209,111) -- (108.71,268.44) ;
%Straight Lines [id:da008688132149232919] 
\draw    (79,91) -- (186.37,266) ;
%Straight Lines [id:da9656797538276733] 
\draw    (209,111) -- (186.37,266) ;
%Straight Lines [id:da7670453669189268] 
\draw    (79,211) -- (108.71,268.44) ;
%Shape: Ellipse [id:dp6280015594087087] 
\draw  [dash pattern={on 4.5pt off 4.5pt}] (212.53,90.38) .. controls (223.57,89.96) and (233.59,117.96) .. (234.91,152.92) .. controls (236.24,187.88) and (228.36,216.56) .. (217.33,216.97) .. controls (206.29,217.39) and (196.27,189.39) .. (194.94,154.44) .. controls (193.62,119.48) and (201.49,90.8) .. (212.53,90.38) -- cycle ;
%Shape: Ellipse [id:dp21243832258799022] 
\draw  [dash pattern={on 4.5pt off 4.5pt}] (84.35,272.05) .. controls (84.38,261.01) and (112.76,252.13) .. (147.74,252.22) .. controls (182.72,252.31) and (211.06,261.34) .. (211.03,272.38) .. controls (211,283.43) and (182.62,292.31) .. (147.64,292.22) .. controls (112.65,292.13) and (84.32,283.1) .. (84.35,272.05) -- cycle ;
%Shape: Ellipse [id:dp269120910328174] 
\draw  [dash pattern={on 4.5pt off 4.5pt}] (71.52,66.42) .. controls (82.56,66) and (93.02,105.48) .. (94.88,154.59) .. controls (96.74,203.71) and (89.3,243.86) .. (78.26,244.28) .. controls (67.22,244.7) and (56.77,205.22) .. (54.91,156.11) .. controls (53.04,106.99) and (60.48,66.84) .. (71.52,66.42) -- cycle ;

% Text Node
\draw (80.7,54.75) node   [align=left] {\begin{minipage}[lt]{21.35pt}\setlength\topsep{0pt}
$L$
\end{minipage}};
% Text Node
\draw (220.7,55.75) node   [align=left] {\begin{minipage}[lt]{21.35pt}\setlength\topsep{0pt}
$R$
\end{minipage}};
% Text Node
\draw (153.7,311.75) node   [align=left] {\begin{minipage}[lt]{21.35pt}\setlength\topsep{0pt}
$C$
\end{minipage}};

\end{tikzpicture}
    \caption{Given a graph $G = (V, E)$, $\{L, R\}$ is a partitioning biclique of $G$, where each edge in $E$ is exactly in one of $\{L, R\}$, $G(V \setminus L)$, and $G(V \setminus R)$.}
    \label{fig:partition_biclique}
\end{figure}
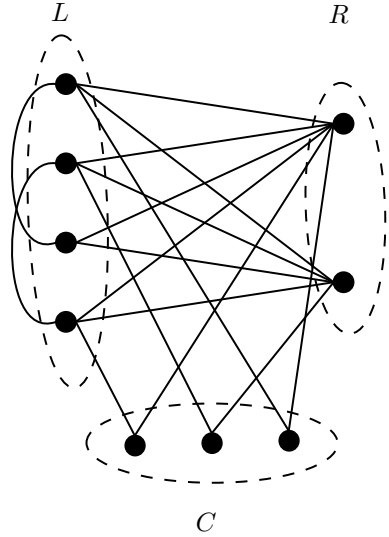

\begin{proposition} \label{prop:partitioned_biclique}
Given a graph $G = (V, E)$ and a biclique subgraph of $G$: $\{L, R\}$, then $\{L, R\}$ is a partitioning biclique subgraph of $G$ if and only if $V \setminus (L \cup R)$ is an independent set in $G$.
\end{proposition}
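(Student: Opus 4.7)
The plan is to reduce the biconditional to a single non-trivial set intersection by exploiting Lemma~\ref{lm:biclique_sep} and the structure of a biclique. Since Lemma~\ref{lm:biclique_sep} already gives $E = E(\{L,R\}) \cup E(G(V\setminus L)) \cup E(G(V\setminus R))$, showing that $\{L,R\}$ is a partitioned biclique reduces to proving pairwise disjointness of these three edge sets. So I would first argue that two of the three pairs of intersections are automatically empty, leaving only one meaningful condition that corresponds exactly to independence of $V \setminus (L \cup R)$.

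For the automatic disjointness, I would observe that any edge in $E(\{L,R\})$ has one endpoint in $L$ and the other in $R$ (and $L \cap R = \emptyset$ because $\{L,R\}$ is a biclique), while any edge in $E(G(V\setminus L))$ has no endpoint in $L$. Hence $E(\{L,R\}) \cap E(G(V\setminus L)) = \emptyset$, and symmetrically $E(\{L,R\}) \cap E(G(V\setminus R)) = \emptyset$. Thus the only possible obstruction to being a partition is $E(G(V\setminus L)) \cap E(G(V\setminus R))$.

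Next, I would identify this remaining intersection explicitly: an edge $uv$ lies in both $E(G(V\setminus L))$ and $E(G(V\setminus R))$ iff $u, v \in (V\setminus L) \cap (V\setminus R) = V \setminus (L \cup R)$. Therefore
\[
E(G(V\setminus L)) \cap E(G(V\setminus R)) = E(G(V \setminus (L \cup R))).
\]
So $\{L,R\}$ is a partitioned biclique of $G$ iff $E(G(V \setminus (L\cup R))) = \emptyset$, which is precisely the statement that $V\setminus (L\cup R)$ is an independent set in $G$. Both directions of the biconditional then follow immediately.

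There is no serious obstacle here; the argument is essentially bookkeeping. The only subtlety worth being explicit about is using $L \cap R = \emptyset$ (which is part of the biclique definition) so that an edge between $L$ and $R$ cannot simultaneously lie in $G(V\setminus L)$ or $G(V\setminus R)$, together with the symmetric handling of $L$ and $R$.
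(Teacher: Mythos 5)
Your proof is correct and follows essentially the same route as the paper's: both reduce the statement via Lemma~\ref{lm:biclique_sep} to disjointness of the three edge sets, observe that the two intersections involving $E(\{L,R\})$ are automatically empty, and identify the remaining intersection with the edges induced on $V \setminus (L \cup R)$. Your version merely packages the two directions into a single equivalence chain, which is a slightly cleaner presentation of the same argument.
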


\begin{proof}
Denote $C= V \setminus (L \cup R)$. In the backward direction, suppose that $C$ is an independent set in $G$. By Lemma~\ref{lm:biclique_sep}, we know that $E = E(\{L, R\}) \cup E(G(V \setminus L)) \cup E(G(V \setminus R))$. Then, we need to show that $E(\{L, R\})$, $E(G(V \setminus L))$, $E(G(V \setminus R))$ are disjointed. Since every edge in $\{L, R\}$ is between vertices in $L$ and $R$, then $E(\{L, R\}) \cap E(G(V \setminus L)) = \emptyset = E(\{L, R\}) \cap E(G(V \setminus R))$. Since $C$ is an independent set of $G$ and 
\begin{align*}
    [(V \setminus L) \times (V \setminus L)] \cap [(V \setminus R) \times (V \setminus R)] = C \times C,
\end{align*}
\noindent we know that $E(G(V \setminus L)) \cap E(G(V \setminus R)) = \emptyset$.

In the forward direction, the proof is by contrapositive. Suppose that $C$ isn't an independent set in $G$. Then, there exists an edge $uv$ such that $u, v \in C$, where $uv$ is in both subgraphs $G(V \setminus L)$ and $G(V \setminus R)$. Hence, $C$ must be an independent set.
\end{proof}

Note that an independent set can be empty. Since the partitioning biclique can partition the original graph into a biclique subgraph and two induced subgraphs, it can be used to design a heuristic to find a biclique partition of a graph. In the next two sections, we will focus on a class of graph, co-chordal, where partitioning bicliques are easy to find since the complementary graph is chordal.

\section{A Heuristic Based on Clique Trees} \label{sec:h_ct}

In this section, we want to design a heuristic to find a biclique partition of a co-chordal graph $G$. Since $G^c$ is chordal, one of the good ways to represent $G^c$ is its clique tree where each vertex represents a maximal clique of $G$ and satisfies the clique-intersection property. We demonstrate a heuristic with an input of a clique tree $\mathcal{T}_{\mathcal{K}^c}$ of $G^c$ and an output of a biclique partition of $G$ in Algorithm~\ref{alg:bp_biclique_sep}. We also show that the size of that biclique partition is equal to $\mc(G^c)-1$, which provides us an upper bound of the biclique partition number of co-chordal graphs.

\begin{algorithm}[H]
\begin{algorithmic}[1]
\State \textbf{Input}: A clique tree $\mathcal{T}_{\mathcal{K}^c}$ of a chordal graph $G^c$.
\State \textbf{Output}: A biclique partition of the complementary graph $G$ of $G^c$.
\Function{FindPartition}{$\mathcal{T}_{\mathcal{K}^c}$}
\If{$|V(\mathcal{T}_{\mathcal{K}^c})| \leq 1$}
\State \textbf{return} $\emptyset$
\EndIf
\State Select an arbitrary edge $e$ to cut $\mathcal{T}_{\mathcal{K}^c}$ into two components $\mathcal{T}_{\mathcal{K}^c_1}$ and $\mathcal{T}_{\mathcal{K}^c_2}$. \label{ln:biclique_partition_sep_edge}
\State $L = \bigcup_{K \in V(\mathcal{T}_{\mathcal{K}^c_1})} K \setminus \MID(e)$; $R = \bigcup_{K \in V(\mathcal{T}_{\mathcal{K}^c_2})} K \setminus \MID(e)$. \label{ln:biclique_partition_sep_biclique}
\State \textbf{return} $\{\{L, R\}\} \cup \Call{FindPartition}{\mathcal{T}_{\mathcal{K}^c_1}} \cup \Call{FindPartition}{\mathcal{T}_{\mathcal{K}^c_2}}$
\EndFunction
\end{algorithmic}
\caption{Find a biclique partition of a co-chordal graph $G$ given a clique tree of $G^c$.} \label{alg:bp_biclique_sep}
\end{algorithm}

We prove that the output of Algorithm~\ref{alg:bp_biclique_sep} is a biclique partition of a co-chordal graph $G$ by showing that at each recursion a nonempty partitioning biclique $\{L, R\}$ is found and two subtrees $\mathcal{T}_{\mathcal{K}^c_1}$ and $\mathcal{T}_{\mathcal{K}^c_2}$ are also clique trees of two induced subgraphs $G^c(V \setminus L)$, and $G^c(V \setminus R)$ of $G^c$ respectively. Note that the edge $e$ can be selected arbitrarily in Algorithm~\ref{alg:bp_biclique_sep}.

\begin{proposition} \label{prop:bp_separation}
Given a chordal graph $G^c = (V, E^c)$ and one of its clique trees $\mathcal{T}_{\mathcal{K}^c} = (\mathcal{K}^c, \mathcal{E})$ where $V(\mathcal{T}_{\mathcal{K}^c}) > 1$, any edge $e$ of $\mathcal{T}_{\mathcal{K}^c}$ can partition $\mathcal{K}^c$ into $\mathcal{K}_1^c$ and $\mathcal{K}_2^c$ (trees $\mathcal{T}_{\mathcal{K}^c_1}$ and $\mathcal{T}_{\mathcal{K}^c_2}$ respectively) such that 
\begin{enumerate}
    \item[$\operatorname{(1)}$] The edges of $\{L, R\} = \{\bigcup_{K \in \mathcal{K}_1^c} K \setminus \MID(e), \bigcup_{K \in \mathcal{K}_2^c} K \setminus \MID(e) \}$, $G(V \setminus L)$, and $G(V \setminus R)$ partition the edges in $G$ where $G$ is the complementary graph of $G^c$, i.e. $\{L, R\}$ is a paritioned biclique subgraph of $G$.
    \item[$\operatorname{(2)}$] $\mathcal{T}_{\mathcal{K}^c_2}$ and $\mathcal{T}_{\mathcal{K}^c_1}$ are clique trees of chordal graphs $G^c(V \setminus L)$, and $G^c(V \setminus R)$ respectively.
    \item[$\operatorname{(3)}$] Both $L$ and $R$ are not empty.
\end{enumerate}
\end{proposition}

\begin{proof}
(1) is proved by Claim~\ref{cm:sep_1} and definition of partitioning biclique, (2) is proved by Claim~\ref{cm:sep_2b}, and (3) is proved by Claim~\ref{cm:sep_3}. 
\end{proof}

% Before we can prove that $\{L, R\}$ is a partitioning biclique of $G$, we want to remark that two vertices in a graph are adjacent if and only if they are both in a maximal clique of G.
% \begin{remark}\label{rm:clique_edge}
% Given a graph $G = (V, E)$ and two distinct vertices $u, v \in V$, $uv \in E$ if and only if both $u$ and $v$ are the vertices of some maximal clique of $G$.
% \end{remark}

% \begin{proof}
% The backward direction is trivial. For the forward direction, let $uv \in E$. A subgraph $(\{u, v\}, \{uv\})$ of $G$ is a clique, then there must exist a maximal clique of $G$ containing both $u$ and $v$.
% \end{proof}

We next prove that $\{L, R\}$ is a partitioning biclique of $G$. Note that $L$ and $R$ do not have to be independent sets.

\begin{claim} \label{cm:sep_1}
$\{L, R\}$ is a partitioning biclique of the complementary graph $G = (V, E)$ of $G^c$.
\end{claim}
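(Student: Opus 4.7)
The plan is to verify the two defining properties of a partitioned biclique for $\{L, R\}$, namely that it really is a biclique of $G$, and that the ``leftover'' vertices $V \setminus (L \cup R)$ form an independent set in $G$, so that Proposition~\ref{prop:partitioned_biclique} immediately applies. The recurring tool throughout will be the clique-intersection property of $\mathcal{T}_{\mathcal{K}^c}$ together with Remark~\ref{rm:clique_edge}.

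First I would pin down the leftover set. Since every vertex of $G^c$ lies in some maximal clique, the union $\bigcup_{K \in \mathcal{K}_1} V(K) \cup \bigcup_{K \in \mathcal{K}_2} V(K)$ is all of $V$, and removing $\MID(e)$ from each side gives $L \cup R = V \setminus \MID(e)$. Thus $V \setminus (L \cup R) = \MID(e)$, which is contained in the clique $V(K_1) \cap V(K_2)$ of $G^c$ (where $K_1, K_2$ are the endpoints of $e$). Hence $\MID(e)$ is an independent set in $G$, handling one of the two hypotheses of Proposition~\ref{prop:partitioned_biclique}.

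Next I would show $L \cap R = \emptyset$ and that every $u \in L$, $v \in R$ satisfies $uv \in E$. Both facts follow from the same observation: if a vertex $w$ appears in some clique of $\mathcal{K}_1$ and also in some clique of $\mathcal{K}_2$, then the unique path in $\mathcal{T}_{\mathcal{K}^c}$ between these two cliques crosses $e$, so by the clique-intersection property $w \in V(K_1) \cap V(K_2) = \MID(e)$. For the disjointness, $L$ and $R$ both exclude $\MID(e)$ by construction, giving $L \cap R = \emptyset$. For the biclique property, suppose for contradiction that $uv \in E^c$ for some $u \in L$, $v \in R$; by Remark~\ref{rm:clique_edge}, $u$ and $v$ lie together in some maximal clique $K \in \mathcal{K}^c$, which belongs to either $\mathcal{K}_1$ or $\mathcal{K}_2$. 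Either way, one of $u,v$ ends up appearing in cliques from both $\mathcal{K}_1$ and $\mathcal{K}_2$, forcing it into $\MID(e)$, contradicting its membership in $L$ or $R$. Hence $uv \in E$ for all such pairs, so $\{L,R\}$ is indeed a biclique of $G$.

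Combining these two steps, $\{L, R\}$ is a biclique of $G$ whose complementary vertex set is the independent set $\MID(e)$, so Proposition~\ref{prop:partitioned_biclique} yields that $\{L, R\}$ is a partitioned biclique of $G$. The only subtle step is the crossing argument in the previous paragraph; everything else is a bookkeeping consequence of the definitions of $L$, $R$, and $\MID(e)$. Note that this argument does not require $L$ or $R$ to be nonempty, so emptiness (which would make $\{L,R\}$ a degenerate biclique with no edges) is not an obstacle here and is instead handled separately in Claim~\ref{cm:sep_3}.
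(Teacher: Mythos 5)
Your proof is correct and follows essentially the same route as the paper: show that $\MID(e)$ is an independent set of $G$ because it is contained in a clique of $G^c$, use the clique-intersection property to derive a contradiction from any $G^c$-edge between the two sides, and conclude via Proposition~\ref{prop:partitioned_biclique}. Your write-up is in fact slightly more complete than the paper's, since you explicitly verify $L \cap R = \emptyset$ and the identity $V \setminus (L \cup R) = \MID(e)$, which the paper leaves implicit.
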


\begin{proof}
Since every edge in a tree is a cut, $e$ can partition $\mathcal{T}_{\mathcal{K}^c}$ into two sets of vertices, $\mathcal{K}_1^c$ and $\mathcal{K}_2^c$, in $\mathcal{T}_{\mathcal{K}^c}$. Let the two ends of edge $e$ in $\mathcal{T}_{\mathcal{K}^c}$ to be $K'$ and $K''$. Since $\MID(e) = K' \cap K''$ and both $K'$ and $K''$ are clique subgraphs of $G^c$, then $\MID(e)$ is an independent set of $G$.

Given an arbitrary $u \in L =\bigcup_{K \in \mathcal{K}_1^c} K \setminus \MID(e)$ and $v \in R = \bigcup_{K \in \mathcal{K}_2^c} K \setminus \MID(e)$, we assume that $uv \in E(G^c)$. Otherwise, $\{L, R\}$ is a biclique of $G$ and our result has been proved.

Since $uv \in E(G^c)$, we know that there exists some $K \in \mathcal{K}^c$ such that $\{u, v\} \subseteq K$. Without loss of generality, we assume that $u \in K_1$ and $\{u, v\} \subseteq K_2$ where $K_1 \in \mathcal{K}_1^c$ and $K_2 \in \mathcal{K}_2^c$. Since $K'$ and $K''$ both on the path between $K_1$ and $K_2$, $K_1 \cap K_2 \subseteq \MID(e)$. Therefore, $u \in \MID(e)$, which is a contradiction. Hence, $uv$ is not an edge of $G^c$ and $\{L, R\}$ is a biclique of $G$. 
\end{proof}

In Claim~\ref{cm:sep_2b}, we prove part (2) of Proposition~\ref{prop:bp_separation}. We first remark that any subgraph of a chordal graph is chordal and then use it to prove part (2) in Claim~\ref{cm:sep_2b}.

\begin{remark} \label{rm:sep_2a}
Given a chordal graph $G^c = (V, E^c)$, any induced subgraph of $G^c$ is chordal.
\end{remark}
% \begin{proof}
% Assume that there exists a subgraph of $G^c$, $G^c(V')$ not a chordal where $V' \subseteq V$. Then, there exists a induced cycle $C$ of $G^c(V')$ with length more than 3. Since $G^c(V')$ is an induced subgraph of $G^c$, then $C$ is also an induced subgraph of $G^c$, which is a contradiction. 
% \end{proof}

\begin{claim} \label{cm:sep_2b}
$\mathcal{T}_{\mathcal{K}^c_2}$ and $\mathcal{T}_{\mathcal{K}^c_1}$ are clique trees of chordal graphs $G^c(V \setminus L)$, and $G^c(V \setminus R)$ respectively.
\end{claim}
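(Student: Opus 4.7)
The plan is to exploit the symmetry between $L$ and $R$ and prove only that $\mathcal{T}_{\mathcal{K}^c_2}$ is a clique tree of $G^c(V\setminus L)$; the statement for $\mathcal{T}_{\mathcal{K}^c_1}$ will follow by relabeling. Since $G^c(V\setminus L)$ is chordal by Remark~\ref{rm:sep_2a}, the task reduces to two things: (i) identifying the vertices of $\mathcal{T}_{\mathcal{K}^c_2}$, namely the cliques in $\mathcal{K}_2$, with the maximal cliques of $G^c(V\setminus L)$, and (ii) verifying the clique-intersection property on $\mathcal{T}_{\mathcal{K}^c_2}$.

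For (i), let $K'$ and $K''$ be the endpoints of the cut edge $e$, with $K'\in\mathcal{K}_1$ and $K''\in\mathcal{K}_2$, so that $\MID(e)=V(K')\cap V(K'')$. First I would check that $V(K)\subseteq V\setminus L$ for every $K\in\mathcal{K}_2$: if some $v\in V(K)$ also lay in a clique $K^*\in\mathcal{K}_1$, then the path from $K$ to $K^*$ in $\mathcal{T}_{\mathcal{K}^c}$ must cross $e$, so the clique-intersection property forces $v\in V(K)\cap V(K^*)\subseteq V(K')\cap V(K'')=\MID(e)$, which places $v$ outside $L$. Because $K$ is already maximal in $G^c$, it is then also maximal in $G^c(V\setminus L)$. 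The converse direction is the main step: given a maximal clique $M$ of $G^c(V\setminus L)$, enclose it in a maximal clique $K^*$ of $G^c$. If $K^*\in\mathcal{K}_2$, the previous paragraph gives $V(K^*)\subseteq V\setminus L$ and maximality of $M$ yields $M=V(K^*)$. If instead $K^*\in\mathcal{K}_1$, I would argue that every vertex $v\in V(K^*)\cap(V\setminus L)$ must belong to $\MID(e)$: $v$ lies in a clique of $\mathcal{K}_1$, and the only way for such a $v$ to escape $L$ is for $v$ to be in $\MID(e)\subseteq V(K'')$. Hence $M\subseteq V(K^*)\cap(V\setminus L)\subseteq V(K'')$, and since $V(K'')$ is a clique in $G^c(V\setminus L)$, maximality of $M$ forces $M=V(K'')$ with $K''\in\mathcal{K}_2$.

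For (ii), the clique-intersection property is inherited essentially for free: removing a single edge of a tree leaves two connected components, so for any two $K_a,K_b\in\mathcal{K}_2$ the unique $K_a$--$K_b$ path in $\mathcal{T}_{\mathcal{K}^c}$ avoids $e$ and therefore coincides with the unique $K_a$--$K_b$ path in $\mathcal{T}_{\mathcal{K}^c_2}$. Every clique on that path already contains $V(K_a)\cap V(K_b)$ because $\mathcal{T}_{\mathcal{K}^c}$ is a clique tree, so $\mathcal{T}_{\mathcal{K}^c_2}$ satisfies the same property.

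The main obstacle I anticipate is the converse half of (i): a priori, an induced subgraph of a chordal graph can have maximal cliques that are not restrictions of the ambient maximal cliques, so it is not obvious that every maximal clique of $G^c(V\setminus L)$ matches some $K\in\mathcal{K}_2$ exactly. The key observation that unlocks this is that any ``shadow'' $V(K^*)\cap(V\setminus L)$ produced by a clique $K^*\in\mathcal{K}_1$ is trapped inside $V(K'')$ via the clique-intersection property applied across the cut edge $e$, so no new maximal clique can appear.
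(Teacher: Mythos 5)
Your proposal is correct and follows essentially the same route as the paper: both reduce the claim to showing that the cliques on one side of the cut edge are exactly the maximal cliques of the corresponding induced subgraph, using the clique-intersection property across $e$ to trap any intersection with the other side inside $\MID(e)$. Your write-up is somewhat more explicit than the paper's on two points it leaves implicit — that the subtree inherits the clique-intersection property because paths avoid $e$, and that every maximal clique of the induced subgraph arises as the trace of a maximal clique of $G^c$ — but these are elaborations, not a different argument.
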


\begin{proof}
By Remark~\ref{rm:sep_2a}, we know that both $G^c(V \setminus L)$, and $G^c(V \setminus R)$ are chordal. Thus, there exist clique trees for both $G^c(V \setminus L)$ and $G^c(V \setminus R)$. Without loss of generality, we only need to prove $\mathcal{T}_{\mathcal{K}^c_1}$ is a clique tree of $G^c(V \setminus R)$. Since $\mathcal{T}_{\mathcal{K}^c_1}$ is a subtree of $\mathcal{T}_{\mathcal{K}^c}$, we only need to show that $\mathcal{K}_1^c$ is the set of all maximal cliques of $G^c(V \setminus R)$. 

First, we want to prove that $\bigcup_{K \in \mathcal{K}_1^c} K = V \setminus R$. We proved in Claim~\ref{cm:sep_1}, $\{L, R\} = \{\bigcup_{K \in \mathcal{K}_1^c} K \setminus \MID(e), \bigcup_{K \in \mathcal{K}_2^c} K \setminus \MID(e) \}$ is a biclique. Thus, $L$ and $R$ are disjoint vertex sets. Since $V = \bigcup_{K \in \mathcal{K}^c} K$ and $L \cup R = \bigcup_{K \in \mathcal{K}^c} K \setminus \MID(e)$, then $\MID(e) = V \setminus (L \cup R)$. Thus,
\begin{align*}
\bigcup_{K \in \mathcal{K}_1^c} K = L \cup \MID(e)= V \setminus R.    
\end{align*}
Hence, $K_1$ is a maximal clique of $G^c(V \setminus R)$ for any $K_1 \in \mathcal{K}_1^c$. By the definition of clique tree, given an arbitrary $K_2 \in \mathcal{K}_2^c$ 
\begin{align*}
    K_2 \cap (V \setminus R) = K_2 \cap \left(\bigcup_{K \in \mathcal{K}_1^c} K\right) = \bigcup_{K \in \mathcal{K}_1^c} (K_2 \cap K) \subseteq \MID(e).
\end{align*}

Since $\MID(e) \subset K'$ for some $K' \in \mathcal{K}_1^c$, then $K_2 \cap (V \setminus R)$ cannot be a maximal clique of $G^c$. Therefore, $\mathcal{K}_1^c$ is the set of all maximal cliques of $G^c(V \setminus R)$ and $\mathcal{T}_{\mathcal{K}^c_1}$ is its clique tree.
\end{proof}

Next, we show that the edge set of biclique $\{L, R\}$ is not empty.

\begin{claim} \label{cm:sep_3}
Both $L$ and $R$ are not empty.
\end{claim}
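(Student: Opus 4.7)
The plan is to show each of $L$ and $R$ contains at least one vertex by looking at the two endpoints of the cut edge $e$. Let $K'$ and $K''$ be the two endpoints of $e$ in $\mathcal{T}_{\mathcal{K}^c}$, so that $\MID(e) = V(K') \cap V(K'')$. Without loss of generality, cutting $e$ places $K' \in \mathcal{K}_1$ and $K'' \in \mathcal{K}_2$. The definitions then give
\begin{align*}
L \;\supseteq\; V(K') \setminus \MID(e) \;=\; V(K') \setminus V(K''), \qquad R \;\supseteq\; V(K'') \setminus V(K').
\end{align*}

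The key step is observing that $K'$ and $K''$ are \emph{distinct maximal} cliques of $G^c$. Maximality forces $V(K') \not\subseteq V(K'')$, since otherwise $K'$ would be a proper subgraph of the clique $K''$ and hence not maximal; likewise $V(K'') \not\subseteq V(K')$. Therefore $V(K') \setminus V(K'') \neq \emptyset$ and $V(K'') \setminus V(K') \neq \emptyset$, which immediately yields $L \neq \emptyset$ and $R \neq \emptyset$.

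There is no real obstacle here; the claim is essentially a one-line consequence of the maximality of the cliques represented by the two endpoints of $e$. The only mild subtlety is being careful to invoke maximality (not just distinctness in the tree) to rule out containment of one clique's vertex set in the other. Everything else follows from the definitions of $L$, $R$, and $\MID(e)$ given in line~\ref{ln:bc_sep_bc} of Algorithm~\ref{alg:biclique_sep}.
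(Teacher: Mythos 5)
Your proof is correct and is essentially identical to the paper's: both argue that the two endpoints $K'$, $K''$ of $e$ are distinct maximal cliques of $G^c$, so neither vertex set contains the other, hence $V(K')\setminus\MID(e)\subseteq L$ and $V(K'')\setminus\MID(e)\subseteq R$ are nonempty. You merely spell out the maximality argument a bit more explicitly than the paper does.
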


\begin{proof}
Let the two ends of edge $e$ in $\mathcal{T}_{\mathcal{K}^c}$ to be $K'$ and $K''$. Since both $K'$ and $K''$ are maximal cliques of $G^c$ and $\MID(e) = K' \cap K''$, then both $K' \setminus \MID(e)$ and $K'' \setminus \MID(e)$ are not empty. We can complete the proof since $K' \in \mathcal{K}_1^c$ and $K'' \in \mathcal{K}_2^c$.
\end{proof}

Next, we show that the output of Algorithm~\ref{alg:bp_biclique_sep} is a biclique partition of a co-chordal graph $G$.

\begin{theorem} \label{thm:bp_correctness}
Given a co-chordal graph $G$ and clique tree $\mathcal{T}_{\mathcal{K}^c}$ of its complement $G^c$, the output of $\Call{FindPartition}{\mathcal{T}_{\mathcal{K}^c}}$ is a biclique partition of $G$.
\end{theorem}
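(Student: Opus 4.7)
The natural approach is strong induction on $n := |V(\mathcal{T}_{\mathcal{K}^c})|$, the number of maximal cliques of $G^c$. For the base case $n \leq 1$, the graph $G^c$ is either empty or a single clique, so $G$ has no edges, and the algorithm returns $\emptyset$, which vacuously partitions $E(G)$.

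For the inductive step, assume the claim holds for every clique tree with strictly fewer than $n$ vertices, and let $\mathcal{T}_{\mathcal{K}^c}$ have $n \geq 2$ vertices. When the algorithm picks an edge $e$ and builds $L$ and $R$ as on line~\ref{ln:bc_sep_bc}, Proposition~\ref{prop:separation}(1) guarantees that $\{L,R\}$ is a partitioned biclique of $G$, so by the definition of partitioned biclique the edges of $\{L,R\}$, $G(V\setminus L)$, and $G(V\setminus R)$ form a partition of $E(G)$. Proposition~\ref{prop:separation}(2) says that $\mathcal{T}_{\mathcal{K}^c_1}$ is a clique tree of $G^c(V\setminus R)$ and $\mathcal{T}_{\mathcal{K}^c_2}$ is a clique tree of $G^c(V\setminus L)$, both of which are chordal by Remark~\ref{rm:sep_2a}. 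Taking complements within the vertex sets $V\setminus R$ and $V\setminus L$ respectively, these are clique trees associated to the co-chordal graphs $G(V\setminus R)$ and $G(V\setminus L)$, so the inductive hypothesis applies to each recursive call, provided each strictly shrinks the tree. That point is exactly Proposition~\ref{prop:separation}(3) (and the fact that $e$ is a genuine tree edge), which implies $|V(\mathcal{T}_{\mathcal{K}^c_i})| < n$ for $i=1,2$.

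By induction, $\Call{FindPartition}{\mathcal{T}_{\mathcal{K}^c_2}}$ is a biclique partition of $G(V\setminus L)$ and $\Call{FindPartition}{\mathcal{T}_{\mathcal{K}^c_1}}$ is a biclique partition of $G(V\setminus R)$. Since a biclique of an induced subgraph of $G$ is also a biclique of $G$, the collection
\[
\{\{L,R\}\} \;\cup\; \Call{FindPartition}{\mathcal{T}_{\mathcal{K}^c_1}} \;\cup\; \Call{FindPartition}{\mathcal{T}_{\mathcal{K}^c_2}}
\]
is a family of biclique subgraphs of $G$. Combining the partition $E(G) = E(\{L,R\}) \sqcup E(G(V\setminus L)) \sqcup E(G(V\setminus R))$ with the inductively guaranteed partitions of $E(G(V\setminus L))$ and $E(G(V\setminus R))$ shows that every edge of $G$ lies in exactly one member of this collection, completing the induction.

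The only subtle point, and the main thing to verify carefully, is the compatibility between ``complementation'' and ``taking induced subgraphs,'' namely that the complement of $G^c(V\setminus L)$ \emph{within the vertex set $V\setminus L$} coincides with $G(V\setminus L)$; this is what lets the inductive hypothesis (stated for co-chordal graphs) feed back into the recursion cleanly. Everything else is bookkeeping supplied directly by Proposition~\ref{prop:separation}.
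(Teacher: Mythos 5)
Your proof is correct and follows essentially the same route as the paper's: strong induction on the number of clique-tree vertices, with Proposition~\ref{prop:separation} supplying the edge partition $E(G) = E(\{L,R\}) \cup E(G(V\setminus L)) \cup E(G(V\setminus R))$ and the fact that the two subtrees are clique trees of the corresponding induced chordal subgraphs. Your explicit attention to the strict shrinking of the subtrees and to the compatibility of complementation with induced subgraphs is a welcome bit of extra care that the paper leaves implicit.
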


\begin{proof}
We will use induction to prove Theorem~\ref{thm:bp_correctness}. In the base step, if $\mathcal{T}_{\mathcal{K}^c}$ only has one vertex, then $G^c$ is a complete graph and $G$ is an empty graph. Thus, $\bp(G) = 0$.

In the induction step, supposed that $\Call{FindPartition}{\mathcal{T}_{\mathcal{K}^c}}$ is a biclique partition of $G$ if $|V(\mathcal{T}_{\mathcal{K}^c})| < k$. Then, we consider the scenario that $\mathcal{T}_{\mathcal{K}^c}$ has $k$ vertices. By Proposition~\ref{prop:bp_separation}, an arbitrary edge $e$ cuts $\mathcal{T}_{\mathcal{K}^c}$ into two components $\mathcal{T}_{\mathcal{K}^c_1}$ and $\mathcal{T}_{\mathcal{K}^c_2}$, where we can construct a partitioning biclique $\{L, R\}$ as in Algorithm~\ref{alg:bp_biclique_sep} that partitions the edges of $G$ into the edge sets of $\{L, R\}$, $G(V \setminus L)$, and $G(V \setminus R)$. Moreover, $\mathcal{T}_{\mathcal{K}^c_1}$ and $\mathcal{T}_{\mathcal{K}^c_2}$ are clique trees of $G^c(V \setminus R)$, and $G^c(V \setminus L)$ respectively. Then, $\Call{FindPartition}{\mathcal{T}_{\mathcal{K}^c_1}}$ returns a biclique partition of $G(V \setminus R)$ and $\Call{FindPartition}{\mathcal{T}_{\mathcal{K}^c_2}}$ returns a biclique partition of $G(V \setminus L)$. Therefore, $\{\{L, R\}\} \cup \Call{FindPartition}{\mathcal{T}_{\mathcal{K}^c_1}} \cup \Call{FindPartition}{\mathcal{T}_{\mathcal{K}^c_2}}$ is a biclique partition of $G$.
\end{proof}

Finally, we prove that the size of the output of Algorithm~\ref{alg:bp_biclique_sep} is equal to one less than the number of maximal cliques of $G^c$.

\begin{theorem} \label{thm:bp_number_cochordal}
Given a co-chordal graph $G$ (with at least one vertex) and clique tree $\mathcal{T}_{\mathcal{K}^c}$ of its complement $G^c$, the output of $\Call{FindPartition}{\mathcal{T}_{\mathcal{K}^c}}$ is a biclique partition of $G$ with size $\mc(G^c) - 1$.
\end{theorem}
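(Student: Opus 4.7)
The plan is to prove this by strong induction on the number of maximal cliques of $G^c$, which equals $|V(\mathcal{T}_{\mathcal{K}^c})|$. This matches the recursive structure of Algorithm~\ref{alg:biclique_sep}: each recursive call is made on a clique tree with strictly fewer vertices, and Theorem~\ref{thm:bp_correctness} has already established that the recursion produces a valid biclique partition, so the only remaining task is counting.

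For the base case, when $\mc(G^c) = 1$ the clique tree has a single vertex, so $\textsc{FindPartition}$ returns $\emptyset$. Its size is $0 = \mc(G^c) - 1$, as required (this also handles the edge case where $G^c$ is a single isolated vertex or a complete graph, in which case $G$ has no edges).

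For the inductive step, assume the claim holds whenever the input clique tree has fewer than $k$ vertices, and let $\mathcal{T}_{\mathcal{K}^c}$ have exactly $k \geq 2$ vertices. The algorithm picks an edge $e$, partitions $\mathcal{T}_{\mathcal{K}^c}$ into $\mathcal{T}_{\mathcal{K}^c_1}$ and $\mathcal{T}_{\mathcal{K}^c_2}$ with $k_1$ and $k_2$ vertices respectively, and adds exactly one new biclique $\{L,R\}$ to the collection. By Claim~\ref{cm:sep_2b}, $\mathcal{T}_{\mathcal{K}^c_1}$ is a clique tree of the chordal graph $G^c(V \setminus R)$ and $\mathcal{T}_{\mathcal{K}^c_2}$ is a clique tree of $G^c(V \setminus L)$; in particular, $\mathcal{K}_1$ is exactly the set of maximal cliques of $G^c(V \setminus R)$, so $\mc(G^c(V \setminus R)) = k_1$, and analogously $\mc(G^c(V \setminus L)) = k_2$. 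Since $k_1, k_2 \geq 1$ and $k_1 + k_2 = k$, both recursive calls satisfy the inductive hypothesis. Applying it gives recursive outputs of sizes $k_1 - 1$ and $k_2 - 1$, so the total size is
\begin{equation*}
1 + (k_1 - 1) + (k_2 - 1) = k_1 + k_2 - 1 = k - 1 = \mc(G^c) - 1.
\end{equation*}

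I do not expect any real obstacle here: the heavy lifting (that cutting the tree yields clique trees of the two induced subgraphs whose maximal cliques are in bijection with $\mathcal{K}_1$ and $\mathcal{K}_2$) has already been done in Proposition~\ref{prop:separation}. The only point worth being careful about is invoking the right part of Claim~\ref{cm:sep_2b} so that the count $\mc(G^c(V \setminus R)) = |\mathcal{K}_1|$ is justified rather than merely asserted, since the overall count $k_1 + k_2 = k$ depends on this identification.
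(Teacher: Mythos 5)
Your proposal is correct and follows essentially the same argument as the paper's proof: induction on the number of clique-tree vertices with the count $1+(k_1-1)+(k_2-1)=k-1$. The only cosmetic difference is that you invoke Claim~\ref{cm:sep_2b} to identify the subtree sizes with $\mc(G^c(V\setminus R))$ and $\mc(G^c(V\setminus L))$, whereas the paper inducts directly on $|V(\mathcal{T}_{\mathcal{K}^c})|$ and cites Claim~\ref{cm:sep_3} for the nonemptiness of $L$ and $R$.
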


\begin{proof}
By Theorem~\ref{thm:bp_correctness}, we know that the output of $\Call{FindPartition}{\mathcal{T}_{\mathcal{K}^c}}$ is a biclique partition of $G$. We then want to prove that the size of the output of $\Call{FindPartition}{\mathcal{T}_{\mathcal{K}^c}}$ is $\mc(G^c) - 1$ by induction.

In the base step, $|V(\mathcal{T}_{\mathcal{K}^c})| = 1$, then $\mc(G^c) = 1$ since $G^c$ is a complete graph. The output is an empty set, which has size of 0.

In the induction step, assume that the size of the output of $\Call{FindPartition}{\mathcal{T}_{\mathcal{K}^c}}$ is $|V(\mathcal{T}_{\mathcal{K}^c})| - 1$ if $|V(\mathcal{T}_{\mathcal{K}^c})| \in \{1, 2, \hdots, k-1\}$. Then, if $|V(\mathcal{T}_{\mathcal{K}^c})| = k$, the output is $\{\{L, R\}\} \cup \Call{FindPartition}{\mathcal{T}_{\mathcal{K}^c_1}} \cup \Call{FindPartition}{\mathcal{T}_{\mathcal{K}^c_2}}$ where $L, R, \mathcal{T}_{\mathcal{K}^c_1}$ and $\mathcal{T}_{\mathcal{K}^c_2}$ are defined in Algorithm~\ref{alg:bp_biclique_sep}. Since $|V(\mathcal{T}_{\mathcal{K}^c_1})| + |V(\mathcal{T}_{\mathcal{K}^c_2})| = k$ and $|V(\mathcal{T}_{\mathcal{K}^c_1})|, |V(\mathcal{T}_{\mathcal{K}^c_2})| > 0$, we know that $|V(\mathcal{T}_{\mathcal{K}^c_1})| < k$ and $|V(\mathcal{T}_{\mathcal{K}^c_2})| < k$. Also both $L$ and $R$ are not empty by Claim~\ref{cm:sep_3}, the size of the output of $\Call{FindPartition}{\mathcal{T}_{\mathcal{K}^c}}$ is $1 + |V(\mathcal{T}_{\mathcal{K}^c_1})| - 1 + |V(\mathcal{T}_{\mathcal{K}^c_2})| - 1 = k - 1$.
\end{proof}

Theorem~\ref{thm:bp_number_cochordal} leads us to a direct result that the biclique partition number of a co-chordal graph $G$ is less than the number of maximal cliques of $G^c$.

\begin{corollary} \label{cor:bp_upp_co_chordal}
If $G$ is a co-chordal graph, then $\bp(G) \leq \mc(G^c)-1$.
\end{corollary}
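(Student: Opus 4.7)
The plan is to derive this corollary as a direct consequence of Theorem~\ref{thm:bp_number_cochordal} together with the existence of a clique tree for any chordal graph. Concretely, since $G$ is co-chordal, its complement $G^c$ is chordal, and by the characterization cited in Section~\ref{sec:pre} (a graph is chordal if and only if it has a clique tree), there exists some clique tree $\mathcal{T}_{\mathcal{K}^c}$ of $G^c$. Applying $\Call{FindPartition}{\mathcal{T}_{\mathcal{K}^c}}$ produces a biclique partition of $G$ whose size is exactly $\mc(G^c)-1$ by Theorem~\ref{thm:bp_number_cochordal}.

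Once one valid biclique partition of $G$ with size $\mc(G^c)-1$ is exhibited, the inequality is immediate from the very definition of $\bp(G)$ as the minimum size of a biclique partition of $G$. So the only step is to package the previous result into this minimality bound.

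There is effectively no obstacle here beyond ensuring the edge case is handled: if $G^c$ has a single vertex, the clique tree has one node and the algorithm returns the empty partition, giving $\bp(G) = 0 = \mc(G^c)-1$, which is consistent. For $|V(G)| \geq 1$, $\mc(G^c) \geq 1$ so the bound $\mc(G^c)-1 \geq 0$ is meaningful. No further combinatorial work is needed; the corollary is essentially a restatement of Theorem~\ref{thm:bp_number_cochordal} in terms of the optimum rather than a specific algorithmic output.
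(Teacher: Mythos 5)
Your proposal is correct and is exactly the paper's (implicit) argument: the corollary follows immediately from Theorem~\ref{thm:bp_number_cochordal}, since $G^c$ chordal guarantees a clique tree exists, the algorithm then exhibits a biclique partition of size $\mc(G^c)-1$, and minimality of $\bp(G)$ gives the inequality. Nothing further is needed.
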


The total time complexity of Algorithm~\ref{alg:bp_biclique_sep} is $O[|V|^3]$, which includes both of taking the complement of a co-chordal graph $G$ and computing a clique tree of $G^c$.

\begin{theorem} \label{thm:alg_bc_sep_time}
Algorithm~\ref{alg:bp_biclique_sep} runs in $O[|V|^3]$.
\end{theorem}
\begin{proof}
Taking the complement of a graph requires $O(|V|^2)$ time. A clique tree of $G^c$ can be computed in $O(|V|^3)$ using an algorithm described in Figure 4.2~\cite{blair1993introduction}.

Since the number of maximal cliques in the chordal graph $G^c$ is at most $|V|$, the number of edges in $\mathcal{T}_{\mathcal{K}^c}$ is $O(|V|)$. Each $e$ in $\mathcal{T}_{\mathcal{K}^c}$ is selected once so that the resulting subtrees are trees with one vertex. Thus, the total number of \Call{FindPartition}{$\mathcal{T}_{\mathcal{K}^c}$} called is $O(|V|)$. Since the size of a maximal clique in $G^c$ is at most $|V|$, it takes $O(|V|^2)$ to compute $L$ and $R$ in \Call{FindPartition}{$\mathcal{T}_{\mathcal{K}^c}$}.
\end{proof}

% \begin{proof}
% A direction result from Theorem~\ref{thm:bp_number_cochordal}.
% \end{proof}

% Note that $L$ and $R$ in Algorithm~\ref{alg:bp_biclique_sep} are allowed to be empty.

% \begin{theorem}
% Given an arbitrary graph $G$ with at least one vertex, $\bp(G) \geq \mc(\bar{G}) - 1$.
% \end{theorem}

% \begin{proof}
% We prove by induction. Since $G$ has at least one vertex, then the number of maximal cliques of $\bar{G}$ is at least 1.

% In the base step, if $\mc(\bar{G})=1$, then $\bar{G}$ must be a complete graph and $G$ is an empty graph. Thus, $\bp(G) = 0 \geq \mc(\bar{G}) - 1$. 

% Then, we assume that $\bp(G) \geq \mc(\bar{G}) - 1$ for all $\bar(G)$ such that $\mc(\bar{G}) = 1, \hdots, k$. Then, we want to prove that $\bp(G) \geq \mc(\bar{G}) - 1$ for $\mc(\bar{G}) = k+1$.
% \end{proof}

\section{A Heuristic Based on Finding Moplexes} \label{sec:h_peo}

Although Algorithm~\ref{alg:bp_biclique_sep} can give a biclique partition of a co-chordal graph $G$, it requires a clique tree of its complementary chordal graph $G^c$ and computes unions of some maximal cliques, which could include some redundant computations. Instead of using clique tree, we can use a moplex in $G^c$ to find a biclique partition. Figure~\ref{fig:clique_tree} motivates the idea via a simple example. In Algorithm~\ref{alg:bp_biclique_sep}, we can select an arbitrary edge in the clique tree to find a biclique. Thus, we can always select an edge that is incident to a leaf node of the tree. For example, if we select the edge with middle set $\{x_3\}$, the biclique built in the current call of \Call{FindPartition}{$\mathcal{T}$} has $L = \{x_4,x_5,x_6,x_7\}$ and $R=\{x_1, x_2\}$, where $\{x_1, x_2\}$ is a moplex of $G^c$. We can also see that selecting the other two edges can also lead to bicliques with a moplex as $L$ or $R$. It motivates us to design another heuristic to find biclique partitions on co-chordal graphs by finding moplexes, which can be found efficiently by LexBFS. In this section, we show that Algorithm~\ref{alg:bp_moplex_lexbfs} can return a biclique partition of a co-chordal graph $G$ with a same size as Algorithm~\ref{alg:bp_biclique_sep} in $O[|V|(|V|+|E^c|)]$ time.

\begin{figure}[H]
    \centering
\tikzset{every picture/.style={line width=0.75pt}} %set default line width to 0.75pt        

\begin{tikzpicture}[x=0.75pt,y=0.75pt,yscale=-0.8,xscale=0.8]
%uncomment if require: \path (0,363); %set diagram left start at 0, and has height of 363

%Flowchart: Process [id:dp12442176812775574] 
\draw   (13,198.65) -- (109.13,198.65) -- (109.13,232.5) -- (13,232.5) -- cycle ;
%Flowchart: Process [id:dp32603087713826073] 
\draw   (181.23,198.65) -- (277.37,198.65) -- (277.37,232.5) -- (181.23,232.5) -- cycle ;
%Flowchart: Process [id:dp18255466334947945] 
\draw   (347.87,198.65) -- (444,198.65) -- (444,232.5) -- (347.87,232.5) -- cycle ;
%Flowchart: Process [id:dp08610838853121883] 
\draw   (181.23,107) -- (277.37,107) -- (277.37,140.85) -- (181.23,140.85) -- cycle ;
%Straight Lines [id:da20856085325954843] 
\draw    (229.3,139.81) -- (229.3,199.17) ;
%Straight Lines [id:da34454691861070197] 
\draw    (109.13,214.79) -- (182.04,215.84) ;
%Straight Lines [id:da3767730738821551] 
\draw    (277.37,214.79) -- (347.07,215.84) ;

% Text Node
\draw (18.07,203.32) node [anchor=north west][inner sep=0.75pt]    {$\{x_{1} ,x_{2} ,x_{3}\} \ $};
% Text Node
\draw (186.3,203.32) node [anchor=north west][inner sep=0.75pt]    {$\{x_{3} ,x_{4} ,x_{5}\} \ $};
% Text Node
\draw (353.73,203.32) node [anchor=north west][inner sep=0.75pt]    {$\{x_{3} ,x_{5} ,x_{6}\} \ $};
% Text Node
\draw (199,111.66) node [anchor=north west][inner sep=0.75pt]    {$\{x_{4} ,x_{7}\} \ $};
% Text Node
\draw (227.83,154.37) node [anchor=north west][inner sep=0.75pt]    {$\{x_{4}\} \ $};
% Text Node
\draw (125.28,187.69) node [anchor=north west][inner sep=0.75pt]    {$\{x_{3}\} \ $};
% Text Node
\draw (283.12,184.57) node [anchor=north west][inner sep=0.75pt]    {$\{x_{3} ,x_{5}\} \ $};

\end{tikzpicture}
    \caption{A clique tree $\mathcal{T}$ of some chordal graph $G^c$ with maximal cliques on the vertices and middle sets on the edges.}
    \label{fig:clique_tree}
\end{figure}
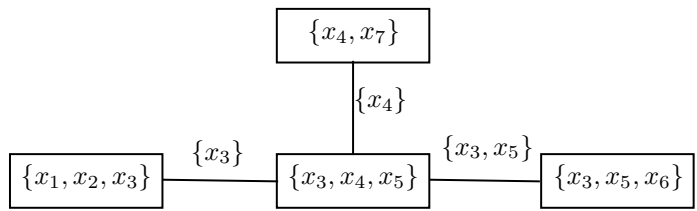
% \begin{algorithm}[H]
% \begin{algorithmic}[1]
% \State \textbf{Input}: Co-chordal graph $G = (V, E)$.
% \State \textbf{Output}: A biclique partition $\bp$ of $G$. 
% \State Initialize $\bp \leftarrow \{\}$.
% \State $G^c$ is the complementary graph of $G$ so it is a chordal graph.
% \State Find a perfect elimination ordering $\sigma: \llbracket n \rrbracket \rightarrow V$ of $G^c$, where $n = |V|$.
% \State For each $i \in \llbracket n \rrbracket$, $\alpha(i) \leftarrow 1$ if the vertex $\sigma(i)$ and its neighbors in $G^c$ with ordering after $i$ form a maximal clique of $G^c$. Otherwise, $\alpha(i) \leftarrow 0$.
% \For{$i \in \llbracket n \rrbracket$}
% \If{$\alpha(i)$}
% \State Let $j$ be the index of first vertex in the ordering $\sigma$ after $i$ such that $\sigma(j) \in N_{G^c}(\sigma(i))$ and $\alpha(j)$ is 1.
% \State $L \leftarrow \{\sigma(i)\} \cup \{\sigma(k) \in N_{G^c}(\sigma(i)): k > i, \alpha(k) = 1\} \cup \{\sigma(k) \in N_{G^c}(\sigma(i)): i< k < j\}$.
% \State $R \leftarrow \{\sigma(k)\}_{k=i}^n \setminus (\{\sigma(i)\} \cup N_{G^c}(\sigma(i)))$.
% \State If $R$ is not empty, $\bp \leftarrow \bp \cup \{\{L, R\}\}$.
% \EndIf
% \EndFor
% \State \textbf{return} $\bp$
% \end{algorithmic}
% \caption{A LexBFS-based heuristic to find small biclique partition on co-chordal graph $G$.} \label{alg:bp_moplex_lexbfs}
% \end{algorithm}

\begin{algorithm}[H]
\begin{algorithmic}[1]
\State \textbf{Input}: Co-chordal graph $G = (V, E)$.
\State \textbf{Output}: A biclique partition $\bp$ of $G$. 
\State Initialize $\bp \leftarrow \{\}$.
\State $G' \leftarrow G^c$ so it is a chordal graph.
\While{$|V(G')| \geq 1$} \label{ln:bp_peo_while_start}
\State Let $\sigma$ be the perfect elimination ordering of $G'$ obtained by LexBFS (Algorithm~\ref{alg:lexbfs}).
\State $L \leftarrow V(G') \setminus N_{G'}[\sigma(1)]$. % \Comment{$L$ is the set of vertices not in the clique.}
\State $R \leftarrow \{u \in N_{G'}[\sigma(1)]: N_{G'}(u) \cap L = \emptyset\}$. \Comment{$R$ is a moplex containing $\sigma(1)$.} \label{ln:bp_peo_R}
% \For{$u \in N_{G'}(v)$} \label{ln:bp_peo_start}
% \State If $N_{G'}(u) \cap L \neq \emptyset$, $R \leftarrow R \cup \{u\}$.
% \EndFor \label{ln:bp_peo_end}
\State If $L$ is not empty, $\bp \leftarrow \bp \cup \{\{L, R\}\}$.
\State Remove vertices in $R$ from $G'$. \label{ln:peo_op}
\EndWhile \label{ln:bp_peo_while_end}
\State \textbf{return} $\bp$
\end{algorithmic}
\caption{A moplex heuristic to find a biclique partition on co-chordal graph $G$.} \label{alg:bp_moplex_lexbfs}
\end{algorithm}

Note that Xu et al.\cite{xu2013moplex} also apply lexicographical depth-first search (LexDFS) to find moplexes, which can be an alternative method to find moplexes than LexBFS in Algorithm~\ref{alg:bp_moplex_lexbfs}.

Before we can prove that Algorithm~\ref{alg:bp_moplex_lexbfs} returns a biclique partition of a co-chordal graph, we need to show that $\{L, R\}$ is a partitioning biclique of $G'^c$ with vertices $V'$. In addition, one of the two induced subgraphs, $G'^c(V' \setminus L)$, has an empty edge set.
% where one of the two induced subgraphs, $G'^c(V' \setminus R)$, has an empty edge set and the order $\sigma$ preserve a perfect elimination ordering of a chordal graph $G'(V' \setminus L)$ in each iteration of the while loop (Line~\ref{ln:bp_peo_while_start}).

\begin{proposition} \label{prop:peo_correct}
Given a chordal graph $G' = (V', E')$ with at least two maximal cliques and $\sigma$ obtained by LexBFS, let $L = V' \setminus N_{G'}[\sigma(1)]$ and $R = \{u \in N_{G'}[\sigma(1)]: N_{G'}(u) \cap L = \emptyset\}$. Then, the edges of $G'^c$, the complement of $G'$, can be partitioned into edges in a biclique $E(\{L, R\})$ and edges in a chordal graph $E[G'^c(V' \setminus R)]$.
\end{proposition}

\begin{proof}
Since $\sigma$ is obtained by LexBFS (Algorithm~\ref{alg:lexbfs}), $\sigma$ is a perfect elimination ordering. Then, the closed neighborhood $N_{G'}[\sigma(1)]$ is a clique of $G'$. Thus, $V' \setminus (L \cup R)$ is an independent set of $G'^c$. By Proposition~\ref{prop:partitioned_biclique}, $\{L, R\}$ is a partitioning biclique of $G'^c$. Thus, the edges of $G'^c$ is partitioned into $E(\{L, R\})$, $E[G'^c(V' \setminus L)]$, and $E[G'^c(V' \setminus R)]$.
Since $V' \setminus L = N_{G'}[\sigma(1)]$ is a clique of $G'$, $E[G'^c(V' \setminus L)] = \emptyset$. Hence, the edges of $G'^c$ is partitioned into $E(\{L, R\})$ and $E[G'^c(V' \setminus R)]$.
% By Remark~\ref{rm:sep_2a}, $G'(V' \setminus R)$ is chordal. From the definition of perfect elimination ordering, $\sigma'$ is a perfect elimination ordering of a chordal graph $G'(V' \setminus R)$.
\end{proof}

Next, we show that the output of Algorithm~\ref{alg:bp_moplex_lexbfs} is a biclique partition of a co-chordal graph $G$ with a size of $\mc(G^c)-1$.

\begin{proposition} \label{prop:peo_mc_count}
Given a chordal graph $G' = (V', E')$ with at least two maximal cliques and $\sigma$ obtained by LexBFS, let $L = V' \setminus N_{G'}[\sigma(1)]$, $R = \{u \in N_{G'}[\sigma(1)]: N_{G'}(u) \cap L = \emptyset\}$ and $G'_R = G'(V' \setminus R)$. Then, $R$ is a moplex of $G'$ and $\mc(G') = \mc(G'_R) + 1$.
\end{proposition}

\begin{proof}

The conclusion that $R$ is a moplex of $G$ can be drawn from Proposition~\ref{prop:sigma_1_moplex} (Theorem 5.1~\cite{berry1998separability}). The reader is referred to~\cite{berry1998separability} for the detailed proof. We want to note that $R$ is both a module and a clique. 

Since $\sigma$ is obtained by LexBFS in Algorithm~\ref{alg:lexbfs}, $\sigma$ is a perfect elimination ordering. Then, $(V' \setminus L)$ is a maximal clique of $G'$. Since $\sigma(1) \not\in V(G'_R)$, $G'(V' \setminus L)$ is not a subgraph of $G'_R$.

Given an arbitrary maximal clique $K$ of $G'$ that is not $(V' \setminus L)$, there exists $v'$ in clique $K$ such that $v'$ is not $\sigma(1)$ or its neighbor. Thus, $u \not\in R$ for every $u \in K$. Hence, $K$ is also a maximal clique subgraph of $G'_R$. Thus, $\mc(G'_R) \geq \mc(G') - 1$. 

Since $R$ is a moplex, $S = N_{G'}[\sigma(1)] \setminus R$ is a minimal separator of $G'$. Note that $(V' \setminus L) = N_{G'}[\sigma(1)]$ is a maximal clique of $G'$. Since $G'$ is chordal, there must exists a maximal clique $K$ of $G'$ that is not $(V' \setminus L)$ such that $S \subseteq K$ (See Theorem 4.1~\cite{blair1993introduction}). Note that $K$ is a maximal clique of $G'_R$. Therefore, $S$ is not a maximal clique of $G'_R$ and every maximal clique in $G'_R$ is also a maximal clique of $G'$. Hence, $\mc(G') = \mc(G'_R) + 1$. 
\end{proof}

\begin{theorem}
Algorithm~\ref{alg:bp_moplex_lexbfs} returns a biclique partition of $G$ with a size of $\mc(G^c)-1$ given $G$ is a co-chordal graph. 
\end{theorem}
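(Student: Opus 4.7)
The plan is to argue by strong induction on $|V(G)|$. The base case $|V|=1$ is immediate: $G^c$ has a single maximal clique, the unique iteration of the while-loop has $L=\emptyset$, no biclique is emitted, and the returned size $0$ equals $\mc(G^c)-1$.

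For the inductive step, let $v=\sigma(1)$ and let $L,R$ be the sets computed in the first iteration. I would apply Proposition~\ref{prop:peo_correct} (with $G'=G^c$) to obtain three facts at once: $\{L,R\}$ is a partitioned biclique of $G$; the remaining edges of $G$ lie entirely in $G(V\setminus R)$; and $\sigma|_{V\setminus R}$ is a PEO of the chordal graph $G^c(V\setminus R)$. These facts let me identify the rest of the while-loop with a fresh invocation of Algorithm~\ref{alg:bp_peo} on the smaller co-chordal graph $G(V\setminus R)$, so the inductive hypothesis supplies $\mc(G^c(V\setminus R))-1$ bicliques from the recursion and the total output size becomes $\mc(G^c(V\setminus R))$ when $L\neq\emptyset$. (When $L=\emptyset$, $G^c$ is a single clique, $\mc(G^c)=1$, and the algorithm emits $0=\mc(G^c)-1$ bicliques, so the claim holds trivially.)

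Everything then reduces to the combinatorial identity $\mc(G^c(V\setminus R))=\mc(G^c)-1$ in the case $L\neq\emptyset$, which is the step I expect to be the main obstacle. Since $v$ is simplicial in $G^c$, the set $K:=\{v\}\cup N_{G^c}(v)$ is the unique maximal clique of $G^c$ containing $v$, and unpacking the definition of $R$ shows that $R$ consists exactly of those vertices of $K$ that lie in no other maximal clique (the \emph{private} vertices of $K$). Every other maximal clique $K_j$ is disjoint from $R$ and therefore persists in $G^c(V\setminus R)$, so the identity fails only if $K\setminus R$ becomes a new maximal clique. To rule this out I plan to invoke the LexBFS property implicit in the algorithm's title: the first vertex of a LexBFS-derived PEO of a chordal graph lies in a maximal clique that is a leaf in some clique tree $\mathcal{T}_{\mathcal{K}^c}$ of $G^c$. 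Letting $K'$ be $K$'s unique neighbor in such a $\mathcal{T}_{\mathcal{K}^c}$, the clique-intersection property forces $K\cap K_j\subseteq K\cap K'$ for every $K_j\neq K$, whence $K\setminus R\subseteq K'$ and $K\setminus R$ cannot be a new maximal clique; a fallback strategy, should this leaf-clique property be awkward to cite, is to exhibit an explicit step-by-step correspondence between iterations of Algorithm~\ref{alg:bp_peo} and leaf-edge cuts of such a clique tree and then invoke Theorem~\ref{thm:bp_number_cochordal} directly.
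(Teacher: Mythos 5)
Your reduction of the theorem to the single identity $\mc(G^c(V\setminus R))=\mc(G^c)-1$ is sound as far as it goes: $R$ is indeed exactly the set of vertices $u$ of $K=\{v\}\cup N_{G^c}(v)$ with $N_{G^c}(u)\subseteq K$, every other maximal clique is disjoint from $R$ and survives intact, and the only way the identity can fail is for $K\setminus R$ to be maximal in $G^c(V\setminus R)$. Your instinct that some property of the \emph{particular} ordering is indispensable here is also correct, and is a point on which you are more careful than the paper: for an arbitrary perfect elimination ordering the statement is false (take $G^c$ on $\{v,a,b,x,y\}$ with edges $va,vb,ab,ax,by$; the ordering $v,x,y,a,b$ is a PEO, yet the while loop emits three bicliques while $\mc(G^c)-1=2$, precisely because $\{a,b\}$ resurfaces as a maximal clique after $v$ is deleted). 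However, there is a genuine gap in how you supply the extra ingredient. The leaf-clique property of the last LexBFS vertex is the moplex theorem of Berry and Bordat --- a nontrivial external result not available in the paper --- and, more seriously, you invoke it only for the \emph{first} pivot. Your induction treats the remaining iterations as a fresh invocation of Algorithm~\ref{alg:bp_peo} on $G(V\setminus R)$, but a fresh invocation would compute its own LexBFS ordering of $G^c(V\setminus R)$, whereas the actual execution continues with the restriction of $\sigma$, which Proposition~\ref{prop:peo_correct} certifies only to be \emph{some} PEO of the smaller graph. Since the leaf property is exactly what separates good PEOs from bad ones (as the example shows), you must prove that it propagates to $\sigma|_{V\setminus R}$, i.e.\ that every pivot $v_i$, not just $v_1$, lies in a clique that is a leaf of a clique tree of the current graph. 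The same propagation issue afflicts your fallback of matching iterations with leaf-edge cuts and citing Theorem~\ref{thm:bp_number_cochordal}; without it the induction does not close.

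For comparison, the paper does not induct on the graph at all: it argues directly that the pivots of the while loop are in bijection with the maximal cliques of $G^c$ --- every maximal clique equals $\{u'\}\cup\{\text{later neighbours of }u'\}$ for its $\sigma$-first vertex $u'$ (via Lemma~\ref{lm:peo_unique_clique}), no vertex absorbed into some $R_j\setminus\{v_j\}$ can be such a first vertex, and each pivot's clique is claimed maximal --- so the loop runs exactly $\mc(G^c)$ times and the last iteration emits nothing. Your route, if the propagation step were established, would have the attraction of reducing the LexBFS heuristic to the already-proved clique-tree result; but as written the key step is asserted only where it is easiest and assumed where it is hardest.
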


\begin{proof}
By Proposition~\ref{prop:peo_correct}, the output of Algorithm~\ref{alg:bp_moplex_lexbfs} is a biclique partition of $G$. By Proposition~\ref{prop:peo_mc_count}, the number of bicliques in the biclique partition is $\mc(G^c)-1$. We want to note that $L=\emptyset$ if $\mc(G^c) = 1$, so the number is $\mc(G^c)-1$ instead of $\mc(G^c)$.
\end{proof}

We also show that Algorithm~\ref{alg:bp_moplex_lexbfs} is efficient: $O[|V|(|V|+|E^c|)]$ time.

\begin{theorem}
Algorithm~\ref{alg:bp_moplex_lexbfs} runs in $O[|V|(|V|+|E^c|)]$ time.
\end{theorem}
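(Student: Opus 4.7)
The plan is to bound the preprocessing cost and the per‑iteration cost of the main while loop separately, then multiply the latter by the number of iterations. I would begin by observing that forming $G' = G^c$ from $G$ requires scanning all $\binom{|V|}{2}$ possible pairs, which costs $O(|V|^2)$; this is already absorbed into $O[|V|(|V|+|E^c|)]$. Computing a perfect elimination ordering of $G'$ via LexBFS (Algorithm~\ref{alg:lexbfs} with partition refinement, as cited after its statement) costs $O(|V|+|E^c|)$. These two preprocessing steps are therefore dominated by the target bound.

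Next I would show that the while loop on Line~\ref{ln:bp_peo_while_start} executes at most $|V|$ times: each iteration removes at least one vertex (namely $v$, since $v \in R$) from $S$, so after at most $|V|$ iterations $S$ becomes empty. Thus it suffices to prove that a single iteration runs in $O(|V|+|E^c|)$ time.

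Inside an iteration, I would maintain $S$ as a boolean indicator array over $V$ and a doubly‑linked list ordered by $\sigma$, so that locating the first element of $S$ takes $O(1)$. Computing $L$ takes $O(|V|)$ time: first mark $\{v\}\cup N_{G'}(v)$ in time $O(|N_{G'}(v)|)$, then scan the indicator of $S$ in time $O(|V|)$ and collect the unmarked vertices into $L$, also updating a boolean indicator for $L$. The main obstacle is bounding the cost of computing $R$ on Line~\ref{ln:bp_peo_R}, because a naive reading suggests $O(|V|)$ per neighbor of $v$. With the indicator of $L$ already built, however, the test $N_{G'}(u)\cap L = \emptyset$ for a single $u$ reduces to scanning $N_{G'}(u)$ and checking the indicator, which costs $O(|N_{G'}(u)|)$. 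Summing over $u\in N_{G'}(v)$ gives
\begin{equation*}
\sum_{u\in N_{G'}(v)} |N_{G'}(u)| \;\le\; \sum_{u\in V(G')} |N_{G'}(u)| \;=\; 2|E(G')| \;\le\; 2|E^c|,
\end{equation*}
so the construction of $R$ fits in $O(|V|+|E^c|)$ per iteration.

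Finally I would handle the bookkeeping on Line~\ref{ln:peo_op}: removing the vertices in $R$ from $S$ is $O(|R|)\le O(|V|)$ via the indicator array and linked list, and deleting them from $G'$ can be done by, for each $w\in R$, walking its adjacency list and unlinking $w$ from the lists of its neighbors, in total time $O(\sum_{w\in R} |N_{G'}(w)|) = O(|E^c|)$ per iteration. Combining, one iteration costs $O(|V|+|E^c|)$, and multiplying by the $O(|V|)$ iteration bound yields the stated $O[|V|(|V|+|E^c|)]$ running time; the preprocessing is absorbed and does not change the asymptotics.
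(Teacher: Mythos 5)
Your proposal is correct and follows essentially the same route as the paper's proof: preprocessing (complementation in $O(|V|^2)$ and LexBFS in $O(|V|+|E^c|)$) is absorbed, the while loop runs $O(|V|)$ times, and the dominant per-iteration cost is Line~\ref{ln:bp_peo_R}, bounded by $O(|E^c|)$ since each edge of $G'$ is touched a constant number of times. You simply spell out the data-structure details (indicator arrays, adjacency-list deletion) that the paper leaves implicit.
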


\begin{proof}
Taking the complement of a graph requires $O(|V|^2)$ time. The outer while loop has at most $O(|V|)$ iterations. Finding a perfect elimination ordering $\sigma$ of a chordal graph $G^c$ can be done in $O(|V| + |E^c|)$ time by LexBFS with partition refinement. The construction of $L$ takes $O(|V|)$ time. Computing $R$ in Line~\ref{ln:bp_peo_R} takes $O(|E^c|)$, where each edge in $G'$ can be traversed in constant times in the worst case.
\end{proof}

% between Lines~\ref{ln:bp_peo_start} and~\ref{ln:bp_peo_end}

% When listing all maximal cliques, checking whether the cliques cover some new vertex that not cover by cliques before can determine the maximal clique or not.

\section{Lower Bounds of Biclique Partition Numbers on Some Subclasses of Co-Chordal Graphs} \label{sec:lower}

In this section, we will first prove that both Algorithms~\ref{alg:bp_biclique_sep} and~\ref{alg:bp_moplex_lexbfs} find a minimum biclique partition of $G$ if its complement $G^c$ is both chordal and clique vertex irreducible. We start with the well-known Graham-Pollak Theorem, which provides a lower bound on the biclique partition number for complete graphs.

\begin{theorem} [Graham–Pollak Theorem~\cite{graham1971addressing, graham1972embedding}]
The edge set of the complete graph, $K_n$, cannot be partitioned into fewer than $n-1$ biclique subgraphs.
\end{theorem}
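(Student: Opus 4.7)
The plan is to follow the classical linear-algebraic approach due to Witsenhausen and Tverberg, encoding the biclique partition as a polynomial identity and then exploiting a dimension count to force a contradiction. The first step is the following translation: if $K_n$ admits a biclique partition $\{L_k, R_k\}_{k=1}^m$, then, introducing one real variable $x_v$ per vertex $v \in V(K_n)$, I would establish the identity
\begin{equation*}
\sum_{1 \leq i < j \leq n} x_i x_j \;=\; \sum_{k=1}^{m} \Bigl(\sum_{v \in L_k} x_v\Bigr)\Bigl(\sum_{v \in R_k} x_v\Bigr).
\end{equation*}
Expanding the right-hand side produces exactly one monomial $x_i x_j$ for each edge of each biclique, so the partition hypothesis guarantees that every edge of $K_n$ contributes exactly once. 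Writing $A_k = \sum_{v \in L_k} x_v$ and $B_k = \sum_{v \in R_k} x_v$, the identity becomes $\sum_{i<j} x_i x_j = \sum_{k=1}^m A_k B_k$.

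The second step is to argue by contradiction: suppose for the sake of contradiction that $m \leq n-2$. I would consider the system of $m+1$ homogeneous linear equations in the $n$ unknowns $x_1, \ldots, x_n$ given by $A_k = 0$ for $k = 1, \ldots, m$ together with $\sum_{i=1}^n x_i = 0$. Since $m+1 \leq n-1 < n$, the system has strictly fewer equations than unknowns, so there exists a nonzero real solution $(x_1^*, \ldots, x_n^*)$.

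The third step is to evaluate both sides of the identity at this nonzero solution. The right-hand side vanishes because each factor $A_k$ does. For the left-hand side, I would use the standard rearrangement
\begin{equation*}
\sum_{i<j} x_i x_j \;=\; \tfrac{1}{2}\Bigl[\bigl(\textstyle\sum_{i} x_i\bigr)^2 - \sum_i x_i^2\Bigr],
\end{equation*}
which at $(x_1^*, \ldots, x_n^*)$ evaluates to $-\tfrac{1}{2}\sum_i (x_i^*)^2 < 0$, using both $\sum x_i^* = 0$ and the fact that the solution is nontrivial. This contradicts the right-hand side being zero, so $m \geq n-1$.

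The main obstacle I anticipate is ensuring the polynomial identity is set up cleanly, including bookkeeping that each $\{L_k, R_k\}$ is a biclique on disjoint sides so the cross product $A_k B_k$ contributes each edge of the biclique exactly once without doubling. The counting step $m+1 \leq n-1$ is where the argument is delicate: the extra equation $\sum x_i = 0$ is indispensable, because without it one could only force $A_k = 0$ for $k \leq m \leq n-1$, which is an $n-1$-dimensional constraint and leaves no room for a contradiction. A purely combinatorial proof is famously elusive, but the linear-algebra route above is robust and self-contained.
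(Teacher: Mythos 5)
Your argument is correct: the translation of a biclique partition into the polynomial identity $\sum_{i<j} x_i x_j = \sum_{k=1}^m A_k B_k$ is valid because the sides $L_k, R_k$ of a biclique are disjoint, so $A_k B_k$ contributes each edge of that biclique exactly once; the dimension count $m+1 \leq n-1 < n$ yields a nontrivial common zero of $A_1,\dots,A_m$ and $\sum_i x_i$; and evaluating $\sum_{i<j} x_i x_j = \tfrac12\bigl[(\sum_i x_i)^2 - \sum_i x_i^2\bigr]$ there gives the strictly negative value $-\tfrac12\sum_i (x_i^*)^2$, contradicting the vanishing right-hand side. Note, however, that the paper does not prove this theorem at all --- it is quoted as a known result with citations to Graham and Pollak's original papers and then used as a black box (via Proposition~\ref{prop:bp_omega}) to derive the lower bounds in Section~\ref{sec:lower}. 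What you have reconstructed is the standard Tverberg--Witsenhausen linear-algebraic proof, which is indeed the shortest known self-contained route; your remark that the extra equation $\sum_i x_i = 0$ is indispensable, and that no purely combinatorial proof is known, is consistent with the paper's own comment in the introduction.
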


% \begin{proof}
% (Proved by~\cite{tverberg1982decomposition}) Given a complete graph $K_n = (V, E)$, we define a real variable $x_v$ for each vertex $v \in V$. Assume that there exists a biclique partition with size $(n-2)$: $\{\{L_i, R_i\}\}_{i=1}^{n-2}$. Note that $L_i, R_i$ can be empty. Since $K_n$ is a complete graph and $\{\{L_i, R_i\}\}_{i=1}^{n-2}$ is a biclique partition of $K_n$,
% \begin{align} \label{eq:gp_theorem}
%     \left(\sum_{v \in V} x_v \right)^2 - \sum_{v \in V} x_v^2 = 2 \sum_{u,v \in E} x_u x_v = 2 \sum_{i \in \llbracket n - 2 \rrbracket} \left(\sum_{u \in L_i} x_u \right) \left(\sum_{v \in R_i}  x_v \right)
% \end{align}

% If we set $\sum_{v \in V} x_v = 0$, and $\sum_{u \in L_i} x_u = 0$ for $i \in \llbracket n - 2 \rrbracket$, we have a linear system of $n-1$ equations and $n$ variables. Thus, there exists a solution $\hat{x} \in \mathbb{R}^n$ such that $\hat{x}$ has at least one non-zero entry. Then, we plug $\hat{x}$ into \eqref{eq:gp_theorem} and reach a contradiction:
% \begin{align*}
%     0 > \left(\sum_{v \in V} x_v \right)^2 - \sum_{v \in V} x_v^2 = 2 \sum_{i \in \llbracket n - 2 \rrbracket} \left(\sum_{u \in L_i} x_u \right) \left(\sum_{v \in R_i}  x_v \right) = 0.
% \end{align*}
% \end{proof}

We first show that the biclique partition number of a graph is no less than the biclique partition number of its induced subgraph.

\begin{lemma} \label{lm:bp_subgraph}
For every induced subgraph $G'$ of a graph $G$, $\bp(G') \leq \bp(G)$.
\end{lemma}

\begin{proof}
Suppose that $\{\{L^i, R^i\}\}_{i=1}^{\bp(G)}$ is a minimum biclique partition of graph $G$. Then, it is trivial to show that $\{\{L^i \cap V', R^i \cap V'\}\}_{i=1}^{\bp(G)}$ is a biclique partition of graph $G'$. Thus, $\bp(G') \leq \bp(G)$.
\end{proof}

% \begin{proof}
% Suppose that $\{\{L^i, R^i\}\}_{i=1}^{\bp(G)}$ is a minimum biclique partition of graph $G$. Then, we claim that $\{\{L^i \cap V', R^i \cap V'\}\}_{i=1}^{\bp(G)}$ is a biclique partition of graph $G'$. Note that $L^i$ and $R^i$ can be empty. If the claim holds, then we construct a biclique partition of $G'$ with size $\bp(G)$ and therefore $\bp(G') \leq \bp(G)$. Then, we just need to prove the claim instead.

% Let $u, v$ be two arbitrary distinct vertices in $G'$ such that $e=uv \not\in G'$. Since it is not in $G'$ or $G$, it is not in the bicliques $\{L^i, R^i\}$ for any $i \in \llbracket \bp(G) \rrbracket$. It also implies that $e$ is not in the bicliques $\{L^i \cap V', R^i \cap V'\}$ for any $i \in \llbracket \bp(G) \rrbracket$. Hence, $\{L^i \cap V', R^i \cap V'\}$ is a subgraph of $G'$ for any $i \in \llbracket \bp(G) \rrbracket$.

% Let $e$ be an arbitrary edge in $G'$. Since $\{\{L^i, R^i\}\}_{i=1}^{\bp(G)}$ is a biclique partition of $G$, then there exists a unique $j \in \llbracket \bp(G) \rrbracket$ such that $e$ is in $\{L^j, R^j\}$. It implies that $e$ is in the biclique $\{L^j \cap V', R^j \cap V'\}$.
% \end{proof}

Then, we can show that the biclique partition number of an arbitrary graph is no less than its clique number minus one.
\begin{proposition} \label{prop:bp_omega}
Given a graph $G=(V, E)$, $\bp(G) \geq \omega(G) - 1$.
\end{proposition}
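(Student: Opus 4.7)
The plan is a short, three-step reduction: localize to a maximum clique, invoke the subgraph monotonicity of $\bp$, and then quote Graham--Pollak.

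First I would let $K$ be a maximum clique of $G$, so $|V(K)| = \omega(G)$ and the induced subgraph $G(V(K))$ is isomorphic to the complete graph $K_{\omega(G)}$ (note that since $V(K)$ spans a clique in $G$, the induced subgraph on $V(K)$ is exactly $K$ itself, with no missing edges). Next I would apply Lemma~\ref{lm:bp_subgraph} with $V' = V(K)$ to conclude
\begin{equation*}
\bp(K_{\omega(G)}) \;=\; \bp(G(V(K))) \;\leq\; \bp(G).
\end{equation*}

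Finally I would quote the Graham--Pollak Theorem stated just above, which gives $\bp(K_{\omega(G)}) = \omega(G) - 1$ (the theorem provides the lower bound $\bp(K_n) \geq n-1$, which combined with the standard star decomposition matches the claim here). Chaining the two inequalities yields $\omega(G) - 1 \leq \bp(G)$, which is the desired bound.

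There is essentially no obstacle: the only thing to be careful about is that $G(V(K))$ really is a complete graph on $\omega(G)$ vertices (immediate from the definition of $\omega$ and of an induced subgraph), and that we only need the $\geq n-1$ direction of Graham--Pollak, which is the content already cited. The proof is therefore two or three lines long.
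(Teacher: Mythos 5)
Your proof is correct and follows exactly the same route as the paper: restrict to a maximum clique, apply Lemma~\ref{lm:bp_subgraph} for monotonicity of $\bp$ under induced subgraphs, and invoke the Graham--Pollak lower bound $\bp(K_{\omega(G)}) \geq \omega(G)-1$. No gaps; the extra remark about the star decomposition giving equality is unnecessary but harmless.
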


\begin{proof}
Let $K$ be a maximum clique of $G$ with size $\omega(G)$ and $G_K$ be the corresponding induced subgraph. By Graham–Pollak Theorem, we know that $\bp(G_K) \geq \omega(G) - 1$. By Lemma~\ref{lm:bp_subgraph}, $\bp(G) \geq \bp(G_K) \geq \omega(G) - 1$.
\end{proof}

Next, we prove that Algorithms~\ref{alg:bp_biclique_sep} and~\ref{alg:bp_moplex_lexbfs} are exact when the input graph $G$ has a chordal and clique vertex irreducible complement.

\begin{lemma} \label{lm:cvi}
If a graph $G$ is clique vertex irreducible, then $\mc(G) = \omega(G^c)$.
\end{lemma}

\begin{proof}
Since $G$ is clique vertex irreducible, there exists a vertex $v_i$ in maximal clique $K_i$ such that $K_i$ is the only maximal clique in $G$ contains $v_i$. Then, $\{v_i\}_{i=1}^{\mc(G)}$ is an independent set in $G$ so it is a clique in $G^c$. Thus, $\mc(G) \leq \omega(G^c)$. Furthermore, assume that $\omega(G^c) > \mc(G)$ and let $K^c$ be a maximum clique of $G^c$. Then, there exist two distinct vertices $u, v$ in $K^c$ such that $u$ and $v$ are in the same maximal clique of $G$. It implies $uv \in E(G)$, which is a contradiction.
\end{proof}

\begin{theorem}
Given a graph $G$ where its complement $G^c$ is chordal and clique vertex irreducible, $\bp(G) = \mc(G^c)-1$.
\end{theorem}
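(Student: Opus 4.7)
The plan is to bracket $\bp(G)$ from above and below, where the upper bound is already in hand via Corollary~\ref{cor:bp_upp_co_chordal}, so the whole task reduces to proving the matching lower bound $\bp(G) \geq \mc(G^c) - 1$. The strategy is to exhibit a clique of size $\mc(G^c)$ as an induced subgraph of $G$ and then invoke Graham--Pollak together with the subgraph monotonicity Lemma~\ref{lm:bp_subgraph}.

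First I would use the clique vertex irreducibility hypothesis to choose, for each maximal clique $K \in \mathcal{K}^c$ of $G^c$, a private vertex $v_K \in V(K)$ that lies in no other maximal clique of $G^c$. Let $W = \{v_K : K \in \mathcal{K}^c\}$, so $|W| = \mc(G^c)$. The key observation is that for two distinct maximal cliques $K_1 \neq K_2$ of $G^c$, the vertices $v_{K_1}$ and $v_{K_2}$ are not adjacent in $G^c$: if they were, then by Remark~\ref{rm:clique_edge} some maximal clique of $G^c$ would contain both, contradicting the privacy of at least one of them. Hence $W$ is an independent set in $G^c$, which is the same as saying that the induced subgraph $G(W)$ is the complete graph $K_{\mc(G^c)}$.

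Next I would apply Graham--Pollak to $G(W)$ to get $\bp(G(W)) \geq \mc(G^c) - 1$, and then Lemma~\ref{lm:bp_subgraph} to conclude $\bp(G) \geq \bp(G(W)) \geq \mc(G^c) - 1$. Combined with the upper bound $\bp(G) \leq \mc(G^c) - 1$ from Corollary~\ref{cor:bp_upp_co_chordal}, this gives equality.

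I do not foresee a serious obstacle here: once the private-vertex set $W$ is introduced, the only real content is the short argument that distinct private vertices cannot be $G^c$-adjacent, which follows immediately from maximality and privacy. The proof is essentially a clean application of Graham--Pollak to the clique induced by the witnesses of clique vertex irreducibility.
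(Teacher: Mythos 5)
Your proof is correct and follows essentially the same route as the paper: the paper's proof simply asserts that clique vertex irreducibility gives $\mc(G^c) = \omega(G)$ and then invokes Proposition~\ref{prop:bp_omega} (which is itself Graham--Pollak plus Lemma~\ref{lm:bp_subgraph} applied to a maximum clique), whereas you explicitly construct the independent set of private vertices in $G^c$ that witnesses $\omega(G) \geq \mc(G^c)$ and apply Graham--Pollak to it directly. Your write-up actually supplies the one detail the paper leaves implicit.
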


\begin{proof}
By Corollary~\ref{cor:bp_upp_co_chordal}, $\bp(G) \leq \mc(G^c) - 1$ since $G$ is a co-chordal graph. Since $G^c$ is clique vertex irreducible, then $\mc(G^c) = \omega(G)$. By Proposition~\ref{prop:bp_omega}, $\bp(G) \geq \mc(G^c) - 1$.
\end{proof}

Then, we derive a lower bound of the $\bp$ number on any split graph and prove that $\mc(G^c)-2 \leq \bp(G) \leq \mc(G^c)-1$ if $G$ is a split graph.

\begin{lemma} \label{lm:split_mc}
Given a split graph $G = (V, E)$, $V$ can be partitioned into two sets $V_1, V_2$ such that $V_1$ is a clique and $V_2$ is an independent set. Then, $|V_2| \leq \mc(G) \leq |V_2| + 1$.
\end{lemma}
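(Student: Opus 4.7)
The plan is to exploit the structural restriction that a clique in a split graph can meet the independent side $V_2$ in at most one vertex, and then to classify maximal cliques according to whether or not they contain a vertex of $V_2$.

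First I would observe that, since $V_2$ is an independent set, any clique $K$ of $G$ satisfies $|K \cap V_2| \leq 1$. This splits the maximal cliques of $G$ into two disjoint families:
\begin{enumerate}
\item[(A)] maximal cliques $K$ with $K \cap V_2 = \emptyset$; and
\item[(B)] maximal cliques $K$ with $|K \cap V_2| = 1$.
\end{enumerate}

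Next I would bound each family. For family (A), a maximal clique avoiding $V_2$ sits inside $V_1$; but $G(V_1)$ is itself a clique, so by maximality the only possible such clique is $V_1$ itself. Hence there is at most one maximal clique of type (A). For family (B), suppose $K$ is a maximal clique containing a vertex $v \in V_2$. Then $K \setminus \{v\} \subseteq N_G(v) \cap V_1$. Conversely, every vertex in $N_G(v) \cap V_1$ is adjacent to $v$ and, being in the clique $V_1$, is adjacent to every other vertex of $K \setminus \{v\}$; so maximality of $K$ forces $K = \{v\} \cup (N_G(v) \cap V_1)$. In particular, for each $v \in V_2$ there is at most one maximal clique of $G$ that contains $v$, giving at most $|V_2|$ maximal cliques of type (B).

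Adding the two bounds yields $\mc(G) \leq 1 + |V_2|$, as required. I do not expect a serious obstacle here; the one point worth stating carefully is the uniqueness argument for type (B), namely that membership in $V_1$ together with adjacency to $v$ already guarantees adjacency to every other candidate vertex of $K$, so the maximal clique through $v$ is pinned down completely by $v$ and its neighborhood in $V_1$.
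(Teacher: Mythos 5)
Your proposal is correct and follows essentially the same route as the paper: bound the maximal cliques avoiding $V_2$ by one (namely $V_1$) and show each $v \in V_2$ lies in at most one maximal clique because everything else in such a clique sits inside the clique $V_1$. Your uniqueness step is phrased constructively (the clique through $v$ must equal $\{v\} \cup (N_G(v) \cap V_1)$) where the paper argues by merging two hypothetical maximal cliques into a larger one, but these are the same idea.
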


\begin{proof}
Since $V_2$ is an independent set, any maximal clique of $G$ can include at most one vertex in $V_2$. Also, each vertex in $V_2$ is in at least one maximal clique. Then, $|V_2| \leq \mc(G)$.

We then want to prove that given an arbitrary vertex $v \in V_2$, there is at most one maximal clique of $G$ containing $v$. Assume that there exists two distinct maximal cliques of $G$, $K_1$ and $K_2$, containing $v$. Then, it is safe to conclude that both $K_1 \setminus \{v\} \subseteq V_1$ and $K_2 \setminus \{v\} \subseteq V_1$. Since $V_1$ is a clique in $G$, then we can construct a larger clique subgraph of $G$ by including all the vertices in $K_1$ and $K_2$, which is a contradiction. 

There is at most one maximal clique of $G$ not including any vertex in $V_2$, which is $V_1$. Therefore, $\mc(G) \leq |V_2| + 1$.
\end{proof}

Note that if $V_1$ is a maximal clique, then $\mc(G) = |V_2| + 1$. Then, we use Lemma~\ref{lm:split_mc} to prove $\mc(G^c) - 1 \leq \omega(G)$ and eventually $\bp(G) \geq \mc(G^c) - 2$ for an arbitrary split graph $G$.

\begin{theorem} \label{thm:bp_mc_minus_2}
Given a split graph $G$, then $\bp(G) \geq \mc(G^c) - 2$.
\end{theorem}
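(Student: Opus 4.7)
The plan is to derive this lower bound by chaining together two results already established in the paper: Proposition~\ref{prop:bp_omega} (which gives $\bp(G) \geq \omega(G)-1$) and Lemma~\ref{lm:split_mc} (which bounds $\mc$ of a split graph by one more than the size of its independent part). The key observation is that the complement of a split graph is again a split graph with the roles of the clique and independent set swapped, so Lemma~\ref{lm:split_mc} applied to $G^c$ gives information controlled by the clique side of $G$, which is exactly what Proposition~\ref{prop:bp_omega} needs.

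First, I would fix a split partition $V = V_1 \cup V_2$ of $G$ with $G(V_1)$ a clique and $V_2$ independent, and then observe that in $G^c$ the set $V_1$ is independent and $V_2$ induces a clique. Thus $(V_2, V_1)$ is a split partition of $G^c$ in which $V_1$ plays the role of the independent part. Applying Lemma~\ref{lm:split_mc} to $G^c$ with this partition yields
\begin{equation*}
\mc(G^c) \leq |V_1| + 1,
\end{equation*}
equivalently $|V_1| \geq \mc(G^c) - 1$.

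Next, since $G(V_1)$ is a clique in $G$, we have $\omega(G) \geq |V_1|$. Combining with Proposition~\ref{prop:bp_omega} gives
\begin{equation*}
\bp(G) \;\geq\; \omega(G) - 1 \;\geq\; |V_1| - 1 \;\geq\; \mc(G^c) - 2,
\end{equation*}
which is the desired inequality.

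There is no real technical obstacle here; the proof is essentially a one-line chain of inequalities. The only point that deserves care is justifying that $G^c$ is split with the indicated partition (so that Lemma~\ref{lm:split_mc} is applicable) and that $V_1$ is the appropriate part to designate as the independent set of $G^c$ in that lemma. Once this bookkeeping is done, the bound follows immediately by invoking Proposition~\ref{prop:bp_omega}.
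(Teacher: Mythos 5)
Your proposal is correct and follows essentially the same argument as the paper: apply Lemma~\ref{lm:split_mc} to $G^c$ (with $V_1$ as its independent part) to get $\mc(G^c) - 1 \leq |V_1| \leq \omega(G)$, then conclude via Proposition~\ref{prop:bp_omega}. No meaningful differences from the paper's proof.
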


\begin{proof}
We first claim that the clique number of $G$, $\omega(G)$, is no less than the number of maximal cliques, $\mc(G^c) - 1$, i.e. $\omega(G) \geq \mc(G^c) - 1$. Then, by Proposition~\ref{prop:bp_omega}, $\bp(G) \geq \omega(G) - 1 \geq \mc(G^c) - 2$.

To prove the claim, we first start with the definition of split graphs. Since $G = (V, E)$ is a split graph, then $V$ can be partitioned into two sets $V_1, V_2$ such that $V_1$ is a clique and $V_2$ is an independent set of $G$. Then, we know that $|V_1| \leq \omega(G)$. 

Furthermore, $G^c$ is also a split graph, $V_1$ is an independent set of $G^c$, and $V_2$ is a clique in $G^c$. By Lemma~\ref{lm:split_mc}, we know that $\mc(G^c) \leq |V_1| + 1$. Therefore, $\mc(G^c) - 1 \leq |V_1| \leq \omega(G)$.
\end{proof}

In the proof of Theorem~\ref{thm:bp_mc_minus_2}, we show that $\omega(G) \geq \mc(G^c) - 1$ if $G$ is a split graph.
In Figure~\ref{fig:split}, we show that it is possible that $\omega(G) = \mc(G^c) - 1$. In this case, we only have $\mc(G^c) - 2 \leq \bp(G) \leq \mc(G^c) - 1$. However, we can have the exact value of $\bp(G)$, $\bp(G) = \mc(G^c) - 1$, if $\omega(G) = \mc(G^c)$, or, equivalently, the vertices of $G$ can be partitioned into vertex sets $V_1$ and $V_2$ such that $V_1$ is a maximal (maximum) clique and $V_2$ is an independent set but not maximal.

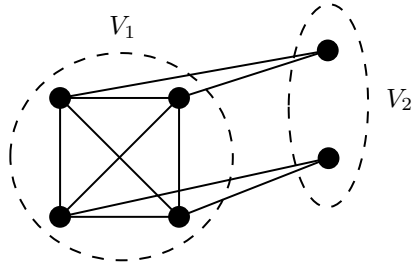
\begin{figure}[H]
    \centering
    \tikzset{every picture/.style={line width=0.75pt}} %set default line width to 0.75pt        

\begin{tikzpicture}[x=0.75pt,y=0.75pt,yscale=-1,xscale=1]
%uncomment if require: \path (0,363); %set diagram left start at 0, and has height of 363

%Shape: Ellipse [id:dp27656041438875567] 
\draw  [dash pattern={on 4.5pt off 4.5pt}] (51.24,165.12) .. controls (51.54,136.24) and (76.9,113.09) .. (107.89,113.41) .. controls (138.88,113.74) and (163.75,137.41) .. (163.45,166.3) .. controls (163.15,195.18) and (137.78,218.33) .. (106.8,218.01) .. controls (75.81,217.68) and (50.93,194.01) .. (51.24,165.12) -- cycle ;
%Shape: Circle [id:dp23123115432593444] 
\draw  [fill={rgb, 255:red, 0; green, 0; blue, 0 }  ,fill opacity=1 ] (71.33,136.08) .. controls (71.33,133.32) and (73.57,131.08) .. (76.33,131.08) .. controls (79.09,131.08) and (81.33,133.32) .. (81.33,136.08) .. controls (81.33,138.84) and (79.09,141.08) .. (76.33,141.08) .. controls (73.57,141.08) and (71.33,138.84) .. (71.33,136.08) -- cycle ;
%Shape: Circle [id:dp9548202201594418] 
\draw  [fill={rgb, 255:red, 0; green, 0; blue, 0 }  ,fill opacity=1 ] (131.33,136.08) .. controls (131.33,133.32) and (133.57,131.08) .. (136.33,131.08) .. controls (139.09,131.08) and (141.33,133.32) .. (141.33,136.08) .. controls (141.33,138.84) and (139.09,141.08) .. (136.33,141.08) .. controls (133.57,141.08) and (131.33,138.84) .. (131.33,136.08) -- cycle ;
%Shape: Circle [id:dp20056187921463553] 
\draw  [fill={rgb, 255:red, 0; green, 0; blue, 0 }  ,fill opacity=1 ] (71.33,196.08) .. controls (71.33,193.32) and (73.57,191.08) .. (76.33,191.08) .. controls (79.09,191.08) and (81.33,193.32) .. (81.33,196.08) .. controls (81.33,198.84) and (79.09,201.08) .. (76.33,201.08) .. controls (73.57,201.08) and (71.33,198.84) .. (71.33,196.08) -- cycle ;
%Shape: Circle [id:dp7118086278713516] 
\draw  [fill={rgb, 255:red, 0; green, 0; blue, 0 }  ,fill opacity=1 ] (206.33,112.08) .. controls (206.33,109.32) and (208.57,107.08) .. (211.33,107.08) .. controls (214.09,107.08) and (216.33,109.32) .. (216.33,112.08) .. controls (216.33,114.84) and (214.09,117.08) .. (211.33,117.08) .. controls (208.57,117.08) and (206.33,114.84) .. (206.33,112.08) -- cycle ;
%Shape: Circle [id:dp8476583290229185] 
\draw  [fill={rgb, 255:red, 0; green, 0; blue, 0 }  ,fill opacity=1 ] (131.33,196.08) .. controls (131.33,193.32) and (133.57,191.08) .. (136.33,191.08) .. controls (139.09,191.08) and (141.33,193.32) .. (141.33,196.08) .. controls (141.33,198.84) and (139.09,201.08) .. (136.33,201.08) .. controls (133.57,201.08) and (131.33,198.84) .. (131.33,196.08) -- cycle ;
%Straight Lines [id:da9041723647657509] 
\draw    (76.33,136.08) -- (136.33,196.08) ;
%Straight Lines [id:da05071010559320088] 
\draw    (76.33,196.08) -- (136.33,196.08) ;
%Straight Lines [id:da22312093634967223] 
\draw    (76.33,136.08) -- (76.33,196.08) ;
%Straight Lines [id:da20731094067313371] 
\draw    (76.33,136.08) -- (136.33,136.08) ;
%Straight Lines [id:da061073543355876225] 
\draw    (136.33,136.08) -- (136.33,196.08) ;
%Straight Lines [id:da18957651137471832] 
\draw    (136.33,136.08) -- (76.33,196.08) ;
%Shape: Circle [id:dp06339993569230695] 
\draw  [fill={rgb, 255:red, 0; green, 0; blue, 0 }  ,fill opacity=1 ] (206.73,166.48) .. controls (206.73,163.72) and (208.97,161.48) .. (211.73,161.48) .. controls (214.49,161.48) and (216.73,163.72) .. (216.73,166.48) .. controls (216.73,169.24) and (214.49,171.48) .. (211.73,171.48) .. controls (208.97,171.48) and (206.73,169.24) .. (206.73,166.48) -- cycle ;
%Shape: Ellipse [id:dp923472217014319] 
\draw  [dash pattern={on 4.5pt off 4.5pt}] (191.63,139.8) .. controls (191.92,111.51) and (201.12,88.67) .. (212.18,88.79) .. controls (223.24,88.91) and (231.96,111.93) .. (231.66,140.22) .. controls (231.37,168.51) and (222.16,191.35) .. (211.11,191.23) .. controls (200.05,191.11) and (191.33,168.09) .. (191.63,139.8) -- cycle ;
%Straight Lines [id:da22645101185413918] 
\draw    (211.73,166.48) -- (76.33,196.08) ;
%Straight Lines [id:da34980476969716956] 
\draw    (76.33,136.08) -- (211.33,112.08) ;
%Straight Lines [id:da07982154666647912] 
\draw    (136.33,136.08) -- (211.33,112.08) ;
%Straight Lines [id:da056640543907092056] 
\draw    (211.73,166.48) -- (136.33,196.08) ;

% Text Node
\draw (115.37,100.5) node   [align=left] {\begin{minipage}[lt]{21.35pt}\setlength\topsep{0pt}
$\displaystyle V_{1}$
\end{minipage}};
% Text Node
\draw (254.97,136.9) node   [align=left] {\begin{minipage}[lt]{21.35pt}\setlength\topsep{0pt}
$\displaystyle V_{2}$
\end{minipage}};

\end{tikzpicture}
    \caption{A split graph $G$ with vertex sets $V_1$ (a maximum clique) and $V_2$ (a maximum independent set), where $\omega(G) = 4$ but $\mc(G^c) = 5$.}
    \label{fig:split}
\end{figure}
\begin{remark}
If the vertices of a split graph $G$ can be partitioned into vertex sets $V_1$ and $V_2$ such that $V_1$ is a maximal (maximum) clique and $V_2$ is an independent set but not maximal, then $\bp(G) = \mc(G^c) - 1$.
\end{remark}

\section{Final remarks} \label{sec:fr}

If a graph $G = (V, E)$ is a co-chordal graph, the biclique partition number of $G$ is less than the number of maximal cliques of its complement $G^c$. Additionally, we provided two heuristics, one based on clique trees and one based on finding moplexes, to find an explicit construction of a biclique partition with a size of $\mc(G^c)-1$. We also showed that the computational time of the moplex heuristic is $O[|V|(|V|+|E^c|)]$.

If a graph $G$ where its complement $G^c$ is both chordal and clique vertex irreducible, then $\bp(G) = \mc(G^c)-1$. If a graph $G$ is a split graph, another subclass of co-chordal, then we have $\mc(G^c)-2 \leq bp(G) \leq \mc(G^c)-1$.

% we derive a lower bound of the biclique partition number of $G$ where $\bp(G) \geq \mc(G^c)-2$, which implies that our heuristics can give a biclique partition on the split graph $G$ with size no more than $\bp(G)+1$.

In Section~\ref{sec:h_ct}, we showed that given a co-chordal graph $G$ and a clique tree of $G^c$, $\mathcal{T}_{\mathcal{K}^c}$, Algorithm~\ref{alg:bp_biclique_sep} can return a biclique partition with a size of $\mc(G^c)-1$ no matter which edge of $\mathcal{T}_{\mathcal{K}^c}$ is selected in each recursion. An open question is whether $\bp(G) = \mc(G^c)-1$ if $G$ is a co-chordal graph or split graph. If it is the case, it is an extension of Graham-Pollak theorem to a more general class of graphs.

\section*{Acknowledgements}

The authors would like to thank the reviewers and Bo Jones for their helpful and insightful comments.

\newpage
\bibliography{mybibfile}

% \newpage
% \appendix

% \section{A Linear-time Implementation of Lexicographic Breadth-First Search} \label{sec:ap_lexbfs}

% \begin{algorithm}[H]
% \begin{algorithmic}[1]
% \State \textbf{Input}: Graph $G = (V, E)$ and a arbitrary selected vertex $v$ in $V$.
% \State \textbf{Output}: An ordering function $\sigma: \llbracket n \rrbracket \rightarrow V $ of the vertices $V$.
% \State Initialize $\mathcal{S} = \{\{v\}, V \setminus \{v\}\}$.
% \For{$i \in \{1, 2, \hdots, n\}$}
% \State Select and remove a vertex $u$ from the first set in $\mathcal{S}$.
% \State If the first set in $\mathcal{S}$ is now empty, remove it from $\mathcal{S}$.
% \State $\sigma(i) \leftarrow u$.
% \For{$\forall w \in N_G(u)$ and $w$ have not been removed}
% \If {No set prior to $S$ which contains $w$ \textbf{or} the set prior to $S$ isn't created while processing $u$}
% \State $T \leftarrow \emptyset$ and place it prior to $S$ in the sequence.
% \Else
% \State Let $T$ be the set prior to $S$.
% \EndIf
% \State Move $w$ from $S$ to $T$. Remove $S$ if it is empty.
% \EndFor
% \EndFor
% \State \textbf{return} $\sigma$
% \end{algorithmic}
% \caption{LexBFS with Partition Refinement~\cite{habib2000lex}.} \label{alg:lexbfs_refine}
% \end{algorithm}

\end{document}